\numberwithin{equation}{section}
\begin{document}


  \title[MLEnKF for spatially extended models]{Multilevel ensemble Kalman filtering for spatially extended models} 

\author[A. Chernov]{Alexey Chernov$^{\dagger\dagger}$}
\thanks{$^{\dagger\dagger}$Institute for Mathematics, Carl von Ossietzky University Oldenburg, Germany (alexey.chernov@uni-oldenburg.de)}

\author[H. Hoel]{H{\aa}kon Hoel$^*$}
\thanks{$^*$Department of Mathematics, University of Oslo, Norway (haakonah@math.uio.no)}

\author[K. J. H. Law]{Kody J. H. Law$^\dagger$}
\thanks{$^\dagger$Computer Science and Mathematics Division, 
Oak Ridge National Laboratory (lawkj@ornl.gov)}

\author[F. Nobile]{Fabio Nobile$^{\circ\circ}$}
\thanks{$^{\circ\circ}$Mathematics Institute of Computational Science and Engineering, \'Ecole polytechnique f\'ed\'erale de Lausanne, 
Switzerland (fabio.nobile@epfl.ch)}

\author[R. Tempone]{Raul Tempone$^\circ$}
\thanks{$^\circ$Applied Mathematics and Computational Sciences, 
KAUST, Thuwal, Saudi Arabia (raul.tempone@kaust.edu.sa)}

%

%




\begin{abstract}

  This work embeds a multilevel Monte Carlo (MLMC) sampling strategy
  into the Monte Carlo step of the ensemble Kalman filter (EnKF),
  thereby yielding a multilevel ensemble Kalman filter (MLEnKF) which
  has provably superior asymptotic cost to a given accuracy level.
  {The development of MLEnKF for finite-dimensional state-spaces in
    the work \cite{ourmlenkf} is here extended to models with
  infinite-dimensional state-spaces in the form of spatial fields.}
  {A concrete example is given to illustrate the results.}
 
   \bigskip
   \noindent \textbf{Key words}: Monte Carlo, multilevel, filtering, Kalman filter, ensemble Kalman filter.
   \bigskip
   \noindent \textbf{AMS subject classification}:
   65C30, 65Y20, 
\end{abstract}

\maketitle




\section{Introduction}
\label{sec:intro}


Filtering refers to the sequential estimation of the state $v$ and/or parameters $p$ of a system through sequential 
incorporation of online data $y$.  The most complete estimation of the state $v_n$ at time $n$ is given by its 
probability distribution
conditional on the observations up to the given time $\bbP(dv_n|y_1,\ldots, y_n)$ \cite{jaz70, BC09}.   
For linear Gaussian systems the analytical solution may be given in closed form, 
via update formulae for the mean and covariance known as the Kalman filter \cite{kalman1960new}.
However, in general there is no closed form solution.  
One must therefore resort to either algorithms which approximate 
the probabilistic solution by leveraging ideas from control theory 
\cite{kal03,jaz70}, or Monte Carlo methods to approximate the filtering distribution itself 
\cite{BC09, doucet2000sequential, del2004feynman}.  The ensemble Kalman filter (EnKF) 
\cite{burgers1998analysis, evensen2003ensemble} combines elements of both approaches.
In the linear Gaussian case it converges to the Kalman filter solution \cite{mandel2011convergence}, and even 
in the nonlinear case, under suitable assumptions it converges \cite{le2011large, law2014deterministic} to 
a limit which, for a single update, is optimal among those which incorporate the data linearly 
\cite{law2014deterministic, luenberger1968optimization, pajonk2012deterministic}.
In the case of spatially extended models approximated on a numerical grid, 
the state space itself may become very high-dimensional and even the linear solves may become intractable.
Therefore, one may be inclined to use the EnKF filter even for linear Gaussian problems in which the solution is
intractable despite being given in closed form on paper by the Kalman filter.

Herein the underlying problem will admit a hierarchy of approximations with cost inversely proportional to accuracy, and it will be necessary to approximate the target for a single prediction step.
It has been proposed to use a multilevel identity to optimize the work required to achieve a certain 
total error level in the Monte Carlo approximation of such random fields 
\cite{heinrich2001multilevel}.
See \cite{Giles14}
for a recent review of multilevel Monte Carlo (MLMC). 
Very recently, 
a number of works have emerged which extend the 
MLMC framework 
to the context of Monte Carlo algorithms designed for Bayesian inference. 
Examples include
Markov chain Monte Carlo 
\cite{ketelsen2013hierarchical, hoang2013complexity}, 
sequential Monte Carlo samplers 
\cite{beskos2015multilevel, jasra2016forward, del2016multilevel}, 
particle filters \cite{jasra2015multilevel, gregory2016multilevel}, 
and EnKF \cite{ourmlenkf}.  
The filtering papers \cite{jasra2015multilevel, gregory2016multilevel, ourmlenkf} 
thusfar all consider only \hh{finite-dimensional}
SDE forward models, with the approximation error as arising from time discretization.

The present work considers the extension of the multilevel EnKF (MLEnKF) 
\cite{ourmlenkf}
to spatially extended models.  
The infinite-dimensional case was considered in the context of the square root EnKF in 
\cite{kwiatkowski2014convergence}.  
As in that work, we will require that the limiting covariance is trace-class. 
It was mentioned above that the limiting EnKF distribution, 
the so-called mean-field EnKF (MFEnKF), 
is in general not the Bayesian posterior filtering 
distribution and has a fixed bias.  
The error of the EnKF approximation may be decomposed into 
MC error and this Gaussian bias as shown in \cite{law2014deterministic}.
According to folklore, small sample sizes are suitable, 
and it may well be due to minimum error being limited by the bias.  
Nonetheless, the latter is difficult to quantify and deal with, 
while the MC error can be controlled and minimized.  
Unfortunately, scientists are often limited to small ensemble sizes anyway, 
due to an extremely high-dimensional underlying state space, 
which is approximating a spatial field.  Within the MLEnKF framework
developed here, a much smaller MC error can be obtained for the same fixed cost,
which will lower the cost requirement for practitioners to ensure that the 
MC error is commensurate with the bias.
Furthermore, it has been shown in 
\cite{kelly2014well, tong2016nonlinear, tong2015nonlinear,kelly2015concrete}
that signal tracking stability of EnKF is based on a feedback control mechanism,
which can be established for a single member ensemble in 3DVAR \cite{brett2013accuracy, blomker2013accuracy, stuart2013analysis, tarn1976observers, olson2003determining, hayden2011discrete, farhat2015data}.
The greater accuracy of EnKF in comparison to 3DVAR \cite{law2015data}
is afforded presumably by its use of the ensemble statistics, and the relation to the 
optimal linear update.  Therefore, it is of interest to improve the MC approximation.


The rest of the paper will be organized as follows.  
In section \ref{sec:kalman} the notation and problem will be introduced,
and the spatial multilevel EnKF (MLEnKF) will be introduced for the first time in sub-section \ref{subsec:mlenkf}.  
In section \ref{sec:theory} it is proven that indeed the spatial MLEnKF inherits almost the same favorable asymptotic ``cost-to-$\varepsilon$'' as 
the standard MLMC for a finite time horizon, and its mean-field limiting distribution is the filtering distribution in the linear and Gaussian case. 
In section \ref{sec:example} a concrete example will be given to illustrate the theory.
Finally, conclusions and future directions are presented in section
\ref{sec:conclusion}.




\section{Kalman filtering}
\label{sec:kalman}


\subsection{General set-up}
\label{subsec:genFiltering}

Let $(\Omega,\cE, \bbP)$ be a complete probability space, where 
$\Omega$ is the set of events, $\cE$ is the sigma algebra of subsets of $\Omega$
and $\bbP$ is the associated probability measure.
Let $\cH$ be a separable Hilbert space and 
$L^p(\Omega;\cH) = \{u : \Omega \rightarrow \cH ; \bbE \|u\|^p_\cH <\infty \}$,
and denote the associated norm 
$\| u \|_{L^p(\Omega;\cH)} = (\bbE \|u\|^p_\cH)^{1/p}$, or just 
$\| u \|_p$ where the meaning is clear.
Consider the general stochastic signal evolution for the random variables 
$u_n \in L^p(\Omega;\cH)$, 
where, 
\begin{equation}\label{eq:psiDefinition}
u_{n+1} = \Psi(u_n), 
\end{equation}
for $n=0,1,\ldots,N-1$.  
In particular, we will be concerned herein with the case in which 
$\Psi: L^p(\Omega;\cH) \rightarrow L^p(\Omega;\cH)$
is the finite-time evolution of an SPDE or, equivalently, a discrete
random mapping (possibly nonlinear) of a spatially extended state 
given as a random $L^p$ integrable element of the separable Hilbert space $\cH$. 
Let $\{\phi_k\}_{k=1}^\infty$ be a countable
orthonormal basis spanning the Hilbert space $\cH$, so that elements
$u\in \cH$ admit the representation $u = \sum_{k=1}^\infty u^k
\phi_k$, where $u^k=\langle u,\phi_k \rangle_{\cH}$.  The notation
$\langle \cdot,\cdot \rangle_{\cH}$ and $\cdot \otimes \cdot$ is used
to denote the inner and outer products over $\cH$, with the induced
norm $\| \cdot \|_{\cH} \coloneq \langle \cdot, \cdot
  \rangle_{\cH}^{1/2}$, while for \hh{finite-dimensional} spaces we assign the
notation $\cR_d = (\bbR^d, \langle \cdot, \cdot \rangle)$ to denote the
Hilbert space with the Euclidean inner product and the induced norm
$\| \cdot \|_{\cR_d} \coloneq \langle \cdot, \cdot
  \rangle^{1/2}$.  Where required, the spatial variable will be
denoted with $x,z \in \bbR^d$ for some $0<d<\infty$.  


Given the history of 
signal observations
\begin{equation}
y_n = H u_n + \eta_n, 
\label{eq:obdef}
\end{equation}
with $H: \cH \rightarrow \bbR^m$ 
linear and $\eta_n$ are i.i.d. with $\eta_1 \sim N(0,\Gamma), \Gamma \in \bbR^{m\times m}$ symmetric positive definite, 
the objective is to track the signal $u_n$ given the observations 
$Y_n$ where $Y_n = (y_1,y_2, \ldots, y_n)$.
Notice that under the given assumptions 
we have a hidden Markov model.  That is, the distribution of the random variable we seek to approximate 
admits the following sequential structure
\begin{align}
\label{eq:filteringdist}
\bbP(du_n | Y_n) & = 
\frac1{Z(Y_n)}{\cL(u_n;y_n) \bbP(du_n |Y_{n-1})}, \\
\nonumber
\bbP(du_n |Y_{n-1}) & = \int_{u_{n-1}\in \cH} \bbP(du_n|u_{n-1})\bbP(du_{n-1}|Y_{n-1}), \\
\nonumber
\cL(u_n;y_n) & = \exp\{-\frac12\|\Gamma^{-1/2}(y_n-Hu_n)\|_{\cR_d}^2\}, \\
Z(Y_n) & = \int_{u_{n}\in \cH} \cL(u_n;y_n) \bbP(du_n |Y_{n-1}).
\nonumber
\end{align}
It will be assumed that $\Psi(\cdot)$ cannot be evaluated exactly, but rather only approximately,
and that there exists a hierarchy of accuracies at which it can be evaluated, each with its associated cost.
The explicit dependence on $\omega$ will be suppressed where confusion is not possible.  
For notational simplicity, we will consider the particular case in which 
the 
map 
$\Psi(\cdot)$ does not depend on $n$.  Note that the results easily extend to the
non-autonomous case, provided the given assumptions
on $\Psi$ are uniform with respect to $\{\Psi_n\}_{n=1}^N$. 
The specialization is merely for notational convenience.
In particular, we will need to denote by $\{\psiL \}_{\ell=0}^\infty$ a hierarchy of approximations to the solution 
$\Psi:=\Psi^{\infty}$. 
First some assumptions must be made.
 \begin{assumption}  For every $p \geq 2$, 
 the solution operators $\{\psiL\}_{\ell=0}^\infty$ satisfy the following conditions, for
 some $0<c_\Psi<\infty$ depending on $\Psi$:
\begin{itemize} 
\item[(i)] $\|\psiL(u) -\psiL(v) \|_{L^p(\Omega;\cH)} {\le} c_{\Psi} \|u-v\|_{L^p(\Omega;\cH)} $, 
\item[(ii)] $\|\psiL(u)\|_{L^p(\Omega;\cH)}^p \leq c_{\Psi} (1+\|u\|_{L^p(\Omega;\cH)}^p)$.
\end{itemize}
\label{ass:psilip}
\end{assumption}

The covariance matrix of random variables $Z, X \in \cH$ 
will be denoted
\[
\mathrm{Cov}[Z,X] \coloneq\E{ (Z-\E{Z}) \otimes (X-\E{X} ) },
\]
with the shorthand  $\mathrm{Cov}[Z] \coloneq \mathrm{Cov}[Z,Z]$.

\subsection{Some details on Hilbert spaces, Hilbert-Schmidt operators, and Cameron-Martin spaces}
\label{ssec:dets}

Let $\cK_1$ and $\cK_2$ be two separable Hilbert \hh{spaces} with inner products $\langle \cdot, \cdot \rangle_{\cH}$ and $\langle \cdot, \cdot \rangle_{\cK}$ and the induced norms
\begin{equation}
\|u\|_{\cK_1} := \langle u, u \rangle_{\cK_1}^{1/2}, \quad  \text{and} \quad
\|u\|_{\cK_2} := \langle u, u \rangle_{\cK_2}^{1/2}.
\end{equation}
The tensor product of $\cK_1$ and $\cK_2$ is a Hilbert space with the inner product defined by
\begin{equation}
 \langle u \otimes v, u' \otimes v' \rangle_{\cK_1 \otimes \cK_2} = \langle u, u' \rangle_{\cK_1}\langle v, v'  \rangle_{\cK_2} \qquad \forall u, u' \in \cK_1, \quad \forall v,v' \in \cK_2
\end{equation}
and extended by linearity to finite sums.
The tensor product $\cK_1 \otimes \cK_2$ is the completion of this set with respect to the induced norm $\|\cdot\|_{\cK_1 \otimes \cK_2}$. 
It holds that
\begin{equation}\label{eq:hsdef}
\|u \otimes v\|_{\cK_1 \otimes \cK_2} = \|u\|_{\cK_1}\|v\|_{\cK_2}.
\end{equation}
Notice furthermore that every $u \otimes v \in \cK_1 \otimes \cK_2$ can be identified with a bounded linear mapping 
\begin{equation}\label{Tuv-def}
 T_{u,v}: \cK_2^* \to \cK_1 \quad \text{with} \quad T_{u,v}(f) \coloneq f(v) u, \text{ for} f \in \cK_2^*.
\end{equation}
For two bounded linear operators $A,B: \cK_2^* \to \cK_1$ we recall the definition of the Hilbert-Schmidt inner product and the norm
\begin{equation}
 \langle A,B \rangle_{HS} = \sum_{k=1}^\infty \langle Ae^*_k, Be^*_k \rangle_{\cK_1},
\qquad 
|A|_{HS} = \langle A,A \rangle_{HS}^{1/2},
\end{equation}
where $\{e^*_k\}_{k=1}^\infty$ is any complete orthonormal sequence in
$\cK_2^*$. A bounded linear operator $A:\cK_2^* \to \cK_1$ is called a
Hilbert-Schmidt operator if $|A|_{HS} < \infty$ and $HS(\cK_2^*,
\cK_1)$ is the space of all such operators. In view of \eqref{Tuv-def}
we observe
\begin{multline*}
 |T_{u,v}|_{HS}^2 = \sum_{k=1}^\infty \langle e^*_k(v)u, e^*_k(v) u \rangle_{\cK_1} 
\\= \|u\|_{\cK_2}^2\sum_{k=1}^\infty |e^*_k(v)|^2 
= \|u\|_{\cK_1}^2\|v\|_{\cK_2}^2 = \|u \otimes v\|_{\cK_1 \otimes \cK_2},
\end{multline*}
and therefore the tensor product space $\cK_1 \otimes \cK_2$ is isometrically isomorphic to $HS(\cK_2^*,\cK_1)$ (and to $HS(\cK_2,\cK_1)$ by the Riesz representation theorem). For an element $A \in \cK_1\otimes \cK_2$ we identify the norms
\begin{equation}
 \|A\|_{\cK_1 \otimes \cK_2} = |A|_{HS}.
\end{equation}

Consider the Gaussian random variable $u \sim \mu_0 := N(0,C)$.  
Provided the spectrum of $C$ is trace-class, then it has  
an eigen-basis which is orthonormal with respect to $\cH$, in the
sense that $C\phi_k=\lambda_k\phi_k$, $\langle
\phi_j,\phi_k\rangle_\cH = \delta_{j,k}$, and $\sum_{k=0}^\infty
\lambda_k < \infty$.  It is easy to see that $u\in \cH$
$\mu_0$-almost surely.  
The space $E:=\{v\in\cH; \| C^{-1/2} v\|_\cH<\infty\}$ 
is known as the Cameron-Martin space, and it is also
clear, by Kolmogorov's three series theorem, cf.~\cite{chung2001},
that $u\sim \mu_0 \Rightarrow u \notin E$ almost surely. In fact,
$E\subset \cH \subset E^*$, where $E^*$ denotes the dual of $E$ wrt
the inner product $\langle\cdot,\cdot\rangle_\cH$, and $C:E^*
\rightarrow E$.  

\begin{proposition}\label{prop:fincov}
If $u\in L^2(\Omega;\cH)$ then  
$C:=\E{(u-\E{u}) \otimes (u - \E{u})} \in \cH \otimes \cH$.
Furthermore, $C: \cH \rightarrow E^2$, where 
$E^2 := \{v\in\cH; \| C^{-1} v\|_\cH<\infty\} \subset E$.
\end{proposition}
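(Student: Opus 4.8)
The plan is to reduce both assertions to the spectral decomposition of $C$. First I note that $\E{u} \in \cH$ (by Jensen's inequality $\|\E{u}\|_\cH \le \E{\|u\|_\cH} \le \|u\|_2 < \infty$), so after replacing $u$ by $u - \E{u}$ there is no loss in assuming $\E{u} = 0$, whence $C = \E{u \otimes u}$. Using the isometry $\cH \otimes \cH \cong HS(\cH,\cH)$ and the identity $|u \otimes u|_{HS} = \|u\|_\cH^2$ recorded in Subsection~\ref{ssec:dets}, the random element $u \otimes u$ is Bochner integrable in $\cH \otimes \cH$ because $\E{\|u \otimes u\|_{\cH \otimes \cH}} = \E{\|u\|_\cH^2} = \|u\|_2^2 < \infty$; its integral equals $C$ (testing against $\phi_j \otimes \phi_k$ commutes with the integral and returns $\langle C\phi_j, \phi_k\rangle_\cH = \E{u^j u^k}$), so $|C|_{HS} \le \E{\|u\|_\cH^2} < \infty$ and $C \in \cH \otimes \cH$. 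Equivalently, one may check directly that $|C|_{HS}^2 = \sum_{j,k}(\E{u^j u^k})^2 \le \big(\sum_k \E{(u^k)^2}\big)^2 = \|u\|_2^4$ by Cauchy--Schwarz and monotone convergence.

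For the mapping property, observe that $C$ is self-adjoint and nonnegative, since $\langle Cv, v\rangle_\cH = \E{\langle u, v\rangle_\cH^2} \ge 0$, and, being Hilbert--Schmidt, compact. The spectral theorem then furnishes an orthonormal eigenbasis $\{\phi_k\}$ of $\cH$ with $C\phi_k = \lambda_k\phi_k$, $\lambda_k \ge 0$, $\lambda_k \to 0$ (in fact $\sum_k \lambda_k = \E{\|u\|_\cH^2} < \infty$, so $C$ is trace class). Writing $v = \sum_k v^k \phi_k \in \cH$, one has $v \in E \iff \sum_k \lambda_k^{-1}(v^k)^2 < \infty$ and $v \in E^2 \iff \sum_k \lambda_k^{-2}(v^k)^2 < \infty$; since $\lambda_k \le 1$ for all but finitely many $k$, the inequality $\lambda_k^{-1} \le \lambda_k^{-2}$ holds eventually, and splitting the sum defining the $E$-norm into the finite set $\{\lambda_k > 1\}$ and its complement gives $E^2 \subset E$. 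Finally, for arbitrary $w = \sum_k w^k \phi_k \in \cH$ one computes $(Cw)^k = \lambda_k w^k$, so $\sum_k \lambda_k^{-2}\big((Cw)^k\big)^2 = \sum_k (w^k)^2 = \|w\|_\cH^2 < \infty$; thus $Cw \in E^2$, which is precisely the assertion $C : \cH \to E^2$.

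The argument is essentially bookkeeping, and I do not anticipate a real obstacle; the only points meriting care are the verification that the $\cH \otimes \cH$-valued expectation reproduces the coordinatewise covariance matrix $(\E{u^j u^k})_{j,k}$, and the implicit requirement that $C$ be injective so that $C^{-1/2}$, $C^{-1}$, and hence $E$ and $E^2$, are unambiguously defined. Should $\ker C \ne \{0\}$, one simply interprets these as the pseudoinverses acting on $(\ker C)^\perp$, and the same computations carry through verbatim.
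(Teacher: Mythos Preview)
Your proof is correct and follows essentially the same approach as the paper's. The paper's argument is terser: after the same centering step it observes $\mathrm{Tr}\,\E{u\otimes u}=\E{\|u\|^2_\cH}<\infty$ (trace-class, hence Hilbert--Schmidt) for the first claim, and for the second simply writes ``obvious since $v=C^{-1}(Cv)$''---which is exactly your coordinate computation $\sum_k \lambda_k^{-2}(\lambda_k w^k)^2=\|w\|_\cH^2$ compressed to one line. Your Bochner-integrability route to $C\in\cH\otimes\cH$ is an equally direct alternative to the trace argument, and your explicit verification of $E^2\subset E$ and the remark on $\ker C$ fill in points the paper leaves implicit.
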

\begin{proof}
 
{Notice that $\| \E{u} \|^2_{\cH} \leq \E{ \| u \|^2_{\cH}}$ by \hh{Jensen's} inequality, 
so $\E{ u}\in \cH$, since $u\in L^2(\Omega;\cH)$.
Without loss of generality let $\E{ u}=0$.
Noting that Tr$(\E{ u\otimes u}) = \E{\|u\|^2_{\cH}}$ provides the first claim.
The second part is obvious since  $v = C^{-1} ( C v)$.}
\end{proof}

\subsection{EnKF}
\label{subsec:enkf}

EnKF uses an ensemble of particles to estimate means and covariance matrices appearing in the Kalman
filter, however the framework can be generalized to non-Gaussian models. 
Let $v_{n,i}$, $\hv_{n,i}$ 
respectively denote the prediction and update of the $i$-th particle at simulation time $n$.  
One EnKF two-step transition consists 
of the propagation of an ensemble $\{\hv_{n,i}\}_{i=1}^M  \mapsto \{\hv_{n+1,i}\}_{i=1}^M$.
\footnote{Due to the implicit linear and Gaussian assumptions underlying the formulation, 
one may determine that it is reasonable to summarize the ensemble by its sample mean and 
covariance and indeed this is often done.  In this case, one may  construct a Gaussian from 
the empirical statistics and resample from that.}
This procedure consists nonetheless in the predict and update steps. 
In the predict step, 
$M$ particle paths are computed over one interval, i.e., 
 \begin{equation}
  v_{n+1}(\omega_i) = \Psi(\hv_{n}(\omega_i),\omega_i) 
   \end{equation}
   for $i=1, \ldots, M$, where $v_{n} (\omega_i) := v_{n,i}$ denotes a
   realization corresponding to the event sample $\omega_i$ of the
   random variable $v_{n} : \Omega \rightarrow \cH$, and 
$\Psi(\cdot,\omega_i)$ signifies the corresponding realization of
   the map for a given initial condition.  Indeed the notation for
   random variable realizations, e.g. $\xi_{n,i}$ and
   $\xi_{n}(\omega_i)$, will be used interchangeably where confusion
   is not possible.  The impetus for introduction of the latter
   notation will become apparent in the next section.  For this
   presentation it suffices to assume a single infinite precision map,
   however there indeed may also be numerical approximation errors,
   i.e. $\Psi^L$ may be used in place of $\Psi$ for some satisfactory
   resolution $L$.  The prediction step is completed by using the
   particle paths to compute sample mean and covariance operator:
  \[
  \begin{split}
  m^{\rm MC}_{n+1}  		& = E_M[v_{n+1}] \ ,\\
  C_{n+1}^{\rm MC}   		& = \cov_M[v_{n+1}] \ ,
  \end{split}
  \]
  with the unbiased sample moments
\begin{equation}\label{eq:sampleAvg}
E_M[v] \coloneq \frac{1}{M} \sum_{i=1}^M  v(\omega_{i} ) \ , 
\end{equation}
and
\begin{equation}\label{eq:sampleCov}
\cov_M[u,v] \coloneq \frac{M}{M-1}\parenthesis{E_M[u \otimes v] - E_M[u] \otimes E_M[v]},
\end{equation}
and the shorthand $\cov_M[u] \coloneq \cov_M[u,u]$.
The update step consists of computing (1) auxiliary operators 
\begin{equation}
 S^{\rm MC}_{n+1}            = HC_{n+1}^{\rm MC}  H^* + \Gamma \text{ and }   K^{\rm MC}_{n+1}   = \left( C_{n+1}^{\rm MC} H^* \right) (S^{\rm MC}_{n+1})^{-1}, 
\label{eq:auxop1}
\end{equation}
where $H^*$ is the adjoint of $H$ defined by 
$\langle a,Hu \rangle_{{\cR_m}} = \langle H^* a, u\rangle_\cH$ for all $a\in\bbR^m$ and $u \in \cH$,
and (2) measurement corrected particle paths for $i=1,2, \ldots, M$,
 \[ 
 \begin{split}
  \yTilde{n+1,i}	 & = y_{n+1} + \eta_{n+1,i},\\
  \hat{v}_{n+1,i} 
  & = (I - K^{\rm MC}_{n+1}H) v_{n+1,i}
  + K^{\rm MC}_{n+1} \yTilde{n+1,i}, 
\end{split}
\]
where the sequence $\{ \eta_{n+1,i} \}_{i=1}^M$ is i.i.d.~with $\eta_{n+1,1} \sim N(0, \Gamma)$. 
This last procedure may appear somewhat ad-hoc.  Indeed it was originally introduced 
in \cite{burgers1998analysis} to correct the statistical error induced in its absence in implementations 
following the original formulation of the ensemble Kalman filter in \cite{evensen1994sequential}.
It has become known as the perturbed observation implementation.
Due to the form of the update, all ensemble members are correlated to one another after 
the first update.  So, even in the linear Gaussian case, 
the ensemble is no 
longer Gaussian after the first update.  Nonetheless, it has been shown that the limiting ensemble
converges to the correct Gaussian in the linear and finite-dimensional case \cite{mandel2011convergence, le2011large},
with the rate $\cO(N^{-1/2})$ in $L^p$ for Lipschitz functionals with polynomial growth at infinity.  
Furthermore, it converges with the same
rate in the nonlinear but Lipschitz case, i.e. under Assumption~\ref{ass:psilip} 
\cite{le2011large, law2014deterministic},
to a limiting distribution which will be discussed further in the subsection \ref{sec:mfenkf}.  
The measurement corrected sample mean and covariance, which need not be computed,
would be given by:
\[
  \begin{split}
  \hm_{n+1}^{\rm MC}  		& = E_M[\hat{v}_{n+1}],\\
  \hc_{n+1}^{\rm MC}   		& = \cov_M[\hat{v}_{n+1}]. 
  \end{split}
\]


For later computing quantities of interest, we introduce the following
notation for the empirical measure of the EnKF ensemble
$\{\hv_{n,i}\}_{i=1}^{M}$:\footnote{Similar may be done for the
  predicting distributions, but the updated distributions will be our
  primary interest.}
\begin{equation}
\hat \mu^{\rm MC}_n = \frac{1}{M} \sum_{i=1}^{M_0} \delta_{\hv_{n,i}}.
\label{eq:emp}
\end{equation}
And for any 
$\varphi: \cH \rightarrow \bbR$, let 
\[
\hat \mu_n^{\rm MC}(\varphi) := \int \varphi d\mu^{\rm ML}_n = 
 \frac{1}{M}\sum_{i=1}^{M} \varphi(\hv_{n,i}) .
\]

This section is concluded with a comment regarding the required computation of auxiliary operators
\eqref{eq:auxop1}.  
In particular, it will be convenient to introduce index summation notation so that it is assumed that indices 
which appear twice will be summed over, i.e. $a_kb_k := \sum_{k} a_k b_k$.
Letting $\{e_i\}_{i=1}^m$ be a basis for $\bbR^m$, one can write $H = 
H_{ik} e_i \otimes \phi_k$, where
$H_{ik} \coloneq \langle e_i , H \phi_k \rangle$,
and $\covHatMC{n+1} = 
C_{n+1,kl}^{{\rm MC}} \phi_k \otimes \phi_l$, where
$C_{n+1,kl}^{{\rm MC}} \coloneq \langle \phi_k,\covHatMC{n+1} \phi_l\rangle$.
Then it makes sense to define the intermediate operator 
\begin{equation}
R^{\rm MC}_{n+1} = 
R^{{\rm MC}}_{n+1,ki} \phi_k \otimes e_i,
\label{eq:arrr}
\end{equation}
where $R^{{\rm MC}}_{n+1,ki}= C_{n+1,kl}^{{\rm MC}} H_{il}$.
The operators of \eqref{eq:auxop1} can be written in terms of indices as 
\begin{equation}
 S^{\rm MC}_{n+1,ij} = H_{il} R^{{\rm MC}}_{n+1,lj} + \Gamma_{ij} \text{ and }   
 K^{{\rm MC}}_{n+1,ki}   = R^{{\rm MC}}_{n+1,kg} \left((S^{\rm MC}_{n+1})^{-1}\right)_{gi}, 
\label{eq:auxop2}
\end{equation}
where the ranges of the indices $k,l =1,2,\dots$ and $i,j,g = 1,2,\dots, m$ are understood.

\subsection{Multilevel EnKF}
\label{subsec:mlenkf}

Herein a hierarchy of spaces are introduced $\cH_\ell = {\rm span} \{\phi_l\}_{l=1}^{N_\ell}$,
{where $\{N_\ell\}$ is an exponentially increasing sequence of natural numbers further
  described in Assumption~\ref{ass:mlrates}}.
Define $\Phi_\ell = [\phi_1,\dots, \phi_{{N_\ell}}] : \bbR^{N_\ell} \rightarrow \cH$ and 
the projection operator $\cP_\ell := \Phi_\ell\Phi_\ell^\top$. 
For $u \in \cH$,
$u^\ell = \cP_\ell u = \sum_{l=1}^{N_\ell} u_{l} \phi_l \in \cH_\ell$,
where $u_{l}=\langle \phi_l, u\rangle$.   
One has that $\cH \supset \dots \supset \cH_{\ell+1} \supset \cH_\ell \supset \dots \supset \cH_0$.
MLEnKF computes particle paths on this 
hierarchy of spaces with a hierarchy of accuracy levels.  
The case where the accuracy levels are given by refinement of the temporal discretization 
\hh{has already been} covered in \cite{ourmlenkf}, for \hh{finite-dimensional} state space.  
Let $v^{\ell}_{n}$, $\hv^\ell_n$ respectively denote the prediction 
and update 
of a particle on solution level $\ell$ at simulation time $n$. 
A solution on level $\ell$ is computed by the numerical integrator $v^{\ell}_{n+1} = \psiL(\hv^{\ell}_{n})$.
Furthermore, let the increment operator for level $\ell$ be given by 
\[
\Delta_\ell v_n   := 
\begin{cases} 
v_n^0, & \text{if } \ell =0,\\
v^\ell_n  - v^{\ell-1}_n , & \text{else if } \ell >0.
\end{cases}
\]
Then the transition from approximation of the distribution of $u_n|Y_n$ to 
the distribution of $u_{n+1}| Y_{n+1}$ in the MLEnKF framework 
consists of the predict/update step of generating {\it pairwise coupled} particle realizations 
on a set of levels $\ell=0,1,\ldots, L$.  However, it is important to note that here one has 
correlation between pairs and also between levels due to the update, 
unlike the standard MLMC in which one has i.i.d. pairs.  This point will be very important,
and we return to it in the following section.   

Similarly to the standard EnKF, the MLEnKF transition is between {\it multilevel} 
ensembles 
$\{(\hv^{\ell}_{n,i})_{i=1}^{M_\ell}\}_{\ell=1}^L \mapsto 
\{(\hv^{\ell}_{n+1,i})_{i=1}^{M_\ell}\}_{\ell=1}^L$.
This consists, as for EnKF, of the predict and update steps. 
In the predict step, particle paths are first computed on a hierarchy of levels. That is, 
the particle paths are computed one step forward by
\begin{equation}\label{eq:DlvHatDef}
\begin{split}
  v^{\ell-1}_{n+1}(\omegaLI)  	& = \Psi^{\ell-1}(\hv^{\ell-1}_{n}(\omegaLI),\omegaLI),\\
  v^{\ell}_{n+1}(\omegaLI) 	 	& = \Psi^\ell(\hv^{\ell}_{n}(\omegaLI), \omegaLI),
  \end{split}
\end{equation}
for the levels $\ell =0,1,\ldots,L$ and level particles $i = 1,2, \ldots, M_\ell$
(where for convenience we introduce the convention that $\vHatL{-1}{} :=0$). 
Here the 
noise in the second argument of the $\psiL$ is 
correlated only within 
pairs, and are otherwise independent. 
Thereafter, sample mean and covariance matrices are computed
as a sum of sample moments of increments over all levels:
\begin{equation}
\label{eq:mlSampleMoments}
\begin{split}
\mml_{n+1} &= \sum_{\ell=0}^L E_{M_\ell}[\Delta_\ell v_{n+1}(\omega_{\ell,\cdot})],\\
\cml_{n+1}  &= \sum_{\ell=0}^L \cov_{M_\ell}[ v^{\ell}_{n+1}(\omega_{\ell,\cdot} )]
- \cov_{M_\ell}[ v^{\ell-1}_{n+1}(\omega_{\ell,\cdot} )],
\end{split}
\end{equation}
where we recall the sample moment notation~\eqref{eq:sampleAvg} and~\eqref{eq:sampleCov}.
Define 
\begin{equation}\label{eq:ex}
X_{M_\ell} := \frac1{\sqrt{M_{\ell}-1}}\left([v^{\ell}_{n+1}(\omega_{\ell,1}), \dots, v^{\ell}_{n+1}(\omega_{\ell,M_\ell})] - 
{E_{M_\ell}[v^{\ell}_{n+1}(\omega_{\ell,\cdot})] {\bf 1}^\top}\right),
\end{equation}
where ${\bf 1}$ is a vector of $M_\ell$ ones. 
Then $\cov_{M_\ell} [ v^{\ell}_{n+1}(\omega_{\ell,\cdot} ) ] = 
X_{M_\ell} X_{M_\ell}^\top$.
The cost of construction is $N_\ell^2 \times M_\ell$, and would therefore be the dominant 
level $\ell$ cost.  It turns out 
{it is not necessary to construct the full covariance}, as will be described below.

Recalling \eqref{eq:auxop1} and \eqref{eq:auxop2}, 
it is necessary for \hh{the} stability of the algorithm that the 
matrix $H R^{\rm ML}_n$ 
appearing in the denominator of the gain (where $R^{\rm ML}_n$ is the multilevel version of the 
operator defined in the Monte Carlo context in equation \eqref{eq:arrr}) is positive semi-definite, a condition
which is {\it not} guaranteed for multilevel estimators.  
This will therefore be {\it imposed} in the algorithm, similarly to the strategy in 
the recent work \cite{ourmlenkf}.  
Let 
$$
H R^{\rm ML}_n = \sum_{i=1}^m \lambda_i q_i q_i^\transpose
$$
denote the eigenvalue decomposition of $H R^{\rm ML}_n$. 
Notice that the multilevel covariance does not ensure min$_i(\lambda_i) \ngeq 0$.  Define
\begin{equation}
H \tilde{R}^{\rm ML}_n = \sum_{i=1; \lambda_i\geq 0}^m \lambda_i q_i q_i^\transpose.
\label{eq:covzee}
\end{equation}
In the update 
step 
the multilevel Kalman gain is 
defined as follows
\begin{equation}
 \kml{n+1}  = R^{\rm ML}_{n+1} 
 (S^{\rm ML}_{n+1})^{-1}, \text{ where } 
  S^{\rm ML}_{n+1}  := H \tilde{R
  }^{\rm ML}_{n+1}  
  + \Gamma. 
\label{eq:newkay}
\end{equation}
{Next, all particle paths are corrected according to measurements and perturbed observations are 
added:
\begin{equation}\label{eq:upsamps}
\begin{split}
  \yTildeL{\ell}{n+1,i}		& = y_{n+1} + \etaL_{n+1,i} \\
  \hv^{\ell-1}_{n+1}(\omega_{i,\ell})  & =     (I - \cP_{\ell-1} \kml{n+1} H ) \vHatL{\ell-1}{n+1}(\omega_{i,\ell} ) + 
  \cP_{\ell-1} \kml{n+1} \yTildeL{\ell}{n+1,i},  \\
  \hv^{\ell}_{n+1}(\omega_{i,\ell})  & =     (I - \cP_\ell \kml{n+1} H ) \vHatL{\ell}{n+1}(\omega_{i,\ell} ) + 
  \cP_{\ell} \kml{n+1} \yTildeL{\ell}{n+1,i},  
  \end{split}
\end{equation}
where the sequence $\{\etaL_{n+1,i}\}_{i=1}^N$ is i.i.d.~with
$\eta^{\{0\}}_{n+1,1} \sim N(0, \Gamma)$.}  It is in this step
precisely that the pairs all become correlated with one another and
the situation becomes significantly more complex than the i.i.d. case.
After the first update, this correlation propagates forward through
\eqref{eq:DlvHatDef} to the next observation time via this ensemble.
This is the conclusion of the update step of the MLEnKF, and this
multilevel ensemble is subsequently propagated forward to the next
prediction 
time via \eqref{eq:DlvHatDef}.
	
\begin{proposition}\label{prop:covcost}
Assuming $m \ll N_0$, 
the cost arising from level $\ell$ in the construction of the $M_\ell$ sample 
updates \eqref{eq:upsamps} is proportional to 
$m \times N_\ell \times M_\ell$.
\end{proposition}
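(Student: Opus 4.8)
The plan is to show that the update \eqref{eq:upsamps} can be carried out without ever forming the $N_\ell\times N_\ell$ sample covariance on level $\ell$: every operation charged to level $\ell$ will go either through ``thin'' $m\times M_\ell$ or $m\times m$ matrices, or through a single dense $N_\ell\times m$ matrix applied $O(M_\ell)$ times. First I would recast \eqref{eq:upsamps} in additive form,
\[
\hv^{\ell}_{n+1}(\omega_{i,\ell}) = v^{\ell}_{n+1}(\omega_{i,\ell}) + \cP_\ell\,\kml{n+1}\bigl(\yTildeL{\ell}{n+1,i} - H v^{\ell}_{n+1}(\omega_{i,\ell})\bigr),
\]
and identically for $\hv^{\ell-1}_{n+1}$ with $\cP_{\ell-1}$, so as to isolate what is genuinely needed from the level-$\ell$ ensemble: (a) the action of $H$ on the $2M_\ell$ predictions $v^{\ell}_{n+1}(\omega_{i,\ell})$, $v^{\ell-1}_{n+1}(\omega_{i,\ell})$; (b) the $m\times m$ matrix $(S^{\rm ML}_{n+1})^{-1}$ of \eqref{eq:newkay}, whose assembly needs the eigendecomposition of the $m\times m$ matrix $H\tilde R^{\rm ML}_{n+1}$, hence of $HR^{\rm ML}_{n+1}$; and (c) the action of $R^{\rm ML}_{n+1}:\bbR^m\to\cH$, post-composed with $\cP_\ell$ or $\cP_{\ell-1}$, once per particle. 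The point is that $\cov_{M_\ell}[v^{\ell}_{n+1}]$ enters only in the combinations $H\,\cov_{M_\ell}[v^{\ell}_{n+1}]\,H^*$ and $\cov_{M_\ell}[v^{\ell}_{n+1}]\,H^*$, never by itself.

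Next I would invoke the factored form of the sample covariance from \eqref{eq:ex}, namely $\cov_{M_\ell}[v^{\ell}_{n+1}] = X_{M_\ell}X_{M_\ell}^\top$ with $X_{M_\ell}$ the $N_\ell\times M_\ell$ array of centred, scaled level-$\ell$ predictions. Then the level-$\ell$ summand of $HR^{\rm ML}_{n+1}$ is $(HX_{M_\ell})(HX_{M_\ell})^\top$ and that of $R^{\rm ML}_{n+1}$ is $X_{M_\ell}(HX_{M_\ell})^\top$, each minus its $(\ell-1)$-analogue built from the coupled level-$(\ell-1)$ predictions, and both are determined by the $m\times M_\ell$ matrices $HX_{M_\ell}$ alone. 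Forming those matrices costs $O(mN_\ell M_\ell)$ (an $m\times N_\ell$ application of $H$ to each of $M_\ell$ centred particles, the centring being $O(N_\ell M_\ell)$); accumulating the $m\times m$ summand into $HR^{\rm ML}_{n+1}$ then costs $O(m^2 M_\ell)$, and accumulating the $N_\ell\times m$ summand into $R^{\rm ML}_{n+1}$ (stored densely, once, at the finest resolution $N_L$, since every level increment lives in $\cH_L$) costs $O(mN_\ell M_\ell)$. Because $\cP_\ell R^{\rm ML}_{n+1}$ is exactly the leading $N_\ell$ rows of $R^{\rm ML}_{n+1}$, the projections cost nothing beyond reading off a sub-block.

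Then I would tally the per-particle update work over the $M_\ell$ particles: each residual $\yTildeL{\ell}{n+1,i} - H v^{\ell}_{n+1}(\omega_{i,\ell})\in\bbR^m$ costs $O(mN_\ell)$; applying $(S^{\rm ML}_{n+1})^{-1}$ costs $O(m^2)$; applying the dense $N_\ell\times m$ matrix $\cP_\ell R^{\rm ML}_{n+1}$ and adding back the prediction costs $O(mN_\ell)$; the companion update of $\hv^{\ell-1}_{n+1}$ obeys the same bound with $N_{\ell-1}<N_\ell$. Adding the $M_\ell$-fold total $O((mN_\ell+m^2)M_\ell)$ to the assembly costs above gives the level-$\ell$ cost
\[
O\bigl(mN_\ell M_\ell + m^2 M_\ell\bigr) = O\bigl(mN_\ell M_\ell\bigr),
\]
the last equality because $m\ll N_0\le N_\ell$. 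The eigendecomposition of $H\tilde R^{\rm ML}_{n+1}$ and the factorization of $S^{\rm ML}_{n+1}$ form a fixed $O(m^3)$ cost, independent of the level structure and not attributable to any individual level.

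The only delicate point, and hence the main obstacle, is the choice of representation for $R^{\rm ML}_{n+1}$ in step two: in its natural factored, sum-over-levels form a single application costs $O\bigl(\sum_{\ell'}N_{\ell'}M_{\ell'}\bigr)$, which would break the budget when applied $O(M_\ell)$ times, and batching the $M_\ell$ residuals does not help since the level-$\ell'$ term then costs $O(N_{\min(\ell,\ell')}M_{\ell'}M_\ell)$. The fix is to assemble $R^{\rm ML}_{n+1}$ densely once; it is precisely at this step that the naive $O(N_\ell^2 M_\ell)$ price of the full level-$\ell$ covariance operator is replaced by $O(mN_\ell M_\ell)$, after which every projection and every particle update is a routine dense matrix--vector product within the same budget.
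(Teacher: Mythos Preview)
Your proof is correct and follows essentially the same approach as the paper's: both avoid forming the full $N_\ell\times N_\ell$ sample covariance by observing that only $R^{\rm ML}_{n+1}=C^{\rm ML}_{n+1}H^*$ is needed, both exploit the factored form $X_{M_\ell}(HX_{M_\ell})^\top$ to get the $O(mN_\ell M_\ell)$ level-$\ell$ assembly cost, and both note that the per-particle application of $\cP_\ell K^{\rm ML}_{n+1}$ costs $O(mN_\ell)$. Your treatment is in fact more thorough than the paper's, particularly in making explicit the dense assembly of $R^{\rm ML}_{n+1}$ and in absorbing the $O(m^2 M_\ell)$ terms via $m\ll N_0\le N_\ell$.
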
	
	
\begin{proof}
Two separate operations are required at each level $\ell$.  
The first arises in the construction of the multilevel gain $\kml{n+1}$ 
in \eqref{eq:newkay}.  Now shall become apparent the impetus for 
introducing the operator $R^{\rm ML}_{n+1} = C^{\rm ML}_{n+1}H^*$ 
in \eqref{eq:arrr}.  Notice at no point is the full $C^{\rm ML}_{n+1}$
required, but rather only 
$$
R^{\rm ML}_{n+1} = \sum_{\ell=0}^L \cov_{M_\ell}[ v^{\ell}_{n+1}(\omega_{\ell,\cdot} ), 
H v^{\ell}_{n+1}(\omega_{\ell,\cdot} )]
- \cov_{M_\ell}[ v^{\ell-1}_{n+1}(\omega_{\ell,\cdot} ), 
H v^{\ell-1}_{n+1}(\omega_{\ell,\cdot} )].
$$
The level $\ell$ contribution to this is dominated by the operation
$X_{M_\ell} (H X_{M_\ell})^\top$, where $X_{M_\ell}$ is defined in \eqref{eq:ex}.
The cost of constructing $H X_{M_\ell} \in \bbR^{m\times M_\ell}$ 
is proportional to $m \times N_\ell \times M_\ell$, and so the cost of constructing 
$X_{M_\ell} (H X_{M_\ell})^\top$ is proportional to 
$2 \times m \times N_\ell \times M_\ell$.  
There is also an insignificant one time cost of $\cO(m^2N_L)$ in the construction 
and inversion of $S^{\rm ML}_{n+1}$.

The second operation at level $\ell$ arises from actually computing
the update \eqref{eq:upsamps} using $\cP_\ell \kml{n+1}$.
The cost of obtaining $\cP_\ell \kml{n+1}$ from $\kml{n+1}$
is negligible, so it is clear that 
each sample incurs a cost $m \times N_\ell$.
\end{proof}

The following notation denotes the empirical measure of the multilevel
ensemble $\{(\hv^{\ell}_{n,i})_{i=1}^{M_\ell}\}_{\ell=1}^L$:
\begin{equation}
\hat \mu^{\rm ML}_n = \frac{1}{M_0} \sum_{i=1}^{M_0} \delta_{
  \hv^{0}_{n}(\omega_{i,0})} + 
\sum_{\ell=1}^{L} \frac{1}{M_\ell} 
\sum_{i=1}^{M_\ell}( \delta_{\hv^{\ell}_{n}(\omega_{i,\ell}) } -
\delta_{\hv^{\ell-1}_{n}(\omega_{i,\ell})} ),
\label{eq:mlemp}
\end{equation}
and for any $\varphi: \cH \rightarrow \bbR$, let 
\[
\hat \mu_n^{\rm ML}(\varphi) := \int \varphi d\mu^{\rm ML}_n = 
\sum_{\ell=0}^{L}  \frac{1}{M_\ell}\sum_{i=1}^{M_\ell} 
{\varphi(\hv^{\ell}_{n}(\omega_{i,\ell})) - \varphi(\hv^{\ell-1}_{n}(\omega_{i,\ell}))}.
\]

\subsection{Nonlinear Kalman filtering}
\label{sec:mfenkf}

It will be useful to introduce the limiting process, 
in the case of nonlinear non-Gaussian forward model \eqref{eq:psiDefinition}.
The following process defines the MFEnKF \cite{law2014deterministic}:
\begin{equation}
\;\;\;\;\;\;\;\mbox{Prediction}\;\left\{\begin{array}{lll}
    \mfv_{n+1}& = \Psi (\hmfv_n), \\
\mfm_{n+1}&=\E{\mfv_{n+1}},\\
\mfc_{n+1}&=\E{({\mfv}_{n+1}-\mfm_{n+1})\otimes({\mfv}_{n+1}-\mfm_{n+1})}
 \end{array}\right.
\label{eq:mfpredict}
\end{equation}
\begin{equation}
\mbox{Update}\left\{\begin{array}{llll} \bar S_{n+1}&=(H\mfc_{n+1})H^* +\Gamma \\
\mfk_{n+1}&=(\mfc_{n+1}H^*) \bar S_{n+1}^{-1}\\
{\tilde y}_{n+1}&=y_{n+1}+\eta_{n+1}\\
\hmfv_{n+1}&=(I- \mfk_{n+1}H){\mfv}_{n+1}+\mfk_{n+1}{\tilde y}_{n+1}.\\
\end{array}\right.
\label{eq:mfupdate}
\end{equation}
\vspace{4pt}
Here $\eta_n$ are i.i.d. draws from 
$N(0,\Gamma).$  
It is easy to see that in the linear Gaussian case 
the mean and variance of the 
above process correspond to the mean and variance of the filtering distribution 
\cite{law2015data}.
Moreover, it was shown in \cite{mandel2011convergence, le2011large} that 
for finite-dimensional state-space the single level EnKF converges to the Kalman 
filtering distribution with the standard rate $\cO(M^{-1/2})$ in this case. 
It was furthermore shown in \cite{le2011large} and \cite{law2014deterministic} 
that for nonlinear Gaussian state-space models and fully non-Gaussian models 
\eqref{eq:psiDefinition}, 
respectively, the EnKF converges to the above process with the same rate 
as long as the models satisfy a Lipschitz criterion as in Assumption~\ref{ass:psilip}.
The work of \cite{ourmlenkf} illustrated that the MLEnKF converges as well, and with 
an asymptotic cost-to-$\varepsilon$ which is strictly smaller than its single level 
EnKF counterpart.  The work of \cite{kwiatkowski2014convergence} 
extended convergence results to \hh{infinite-dimensional} state-space for square root filters.
In this work, the aim is to prove convergence of the MLEnKF 
for infinite-dimensional state-space, with the same favorable asymptotic cost-to-$\varepsilon$ \hh{peformance}. 

The following fact will be necessary in the subsequent section.

\begin{proposition}\label{prop:reg}
Given Assumption \ref{ass:psilip} on $\Psi$, 
the MFEnKF process \eqref{eq:mfpredict}--\eqref{eq:mfupdate} satisfies 
$\bar{v}_n, \hat{\bar{v}}_n \in L^p(\Omega;\cH)$ for all {$n \in \bbN$}.
\end{proposition}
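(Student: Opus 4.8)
The plan is to run an induction on $n$, carrying the regularity through one prediction/update cycle of \eqref{eq:mfpredict}--\eqref{eq:mfupdate}. Since $L^p(\Omega;\cH) \subseteq L^q(\Omega;\cH)$ whenever $p \geq q$ on a probability space, it suffices to establish, for every $p \geq 2$, that $\hat{\bar v}_n \in L^p(\Omega;\cH)$ implies $\bar v_{n+1}, \hat{\bar v}_{n+1} \in L^p(\Omega;\cH)$. For the base case, the process is initialized with $\hat{\bar v}_0 = \bar v_0$ distributed according to the prior; when this law is Gaussian with trace-class covariance (as in the setting of Section~\ref{ssec:dets}), Fernique's theorem yields finite moments of all orders, so $\hat{\bar v}_0 \in L^p(\Omega;\cH)$ for every $p$.

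For the prediction part of the inductive step, $\bar v_{n+1} = \Psi(\hat{\bar v}_n)$ lies in $L^p(\Omega;\cH)$ immediately by the sublinear growth bound of Assumption~\ref{ass:psilip}(ii). In particular $\bar v_{n+1} \in L^2(\Omega;\cH)$, so Proposition~\ref{prop:fincov} gives $\bar m_{n+1} = \E{\bar v_{n+1}} \in \cH$ and $\bar c_{n+1} = \mathrm{Cov}[\bar v_{n+1}] \in \cH \otimes \cH$; in particular $\bar c_{n+1}$ is a bounded (Hilbert--Schmidt), self-adjoint, positive semi-definite operator on $\cH$. Hence $H \bar c_{n+1} H^*$ is a symmetric positive semi-definite $m \times m$ matrix, so that $\bar S_{n+1} = H \bar c_{n+1} H^* + \Gamma$ inherits symmetric positive definiteness from $\Gamma$ and obeys $\| \bar S_{n+1}^{-1} \| \leq \| \Gamma^{-1} \|$ in the operator norm on $\bbR^m$. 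Consequently the mean-field gain $\bar K_{n+1} = (\bar c_{n+1} H^*) \bar S_{n+1}^{-1} : \bbR^m \to \cH$ is a bounded linear operator, and so is $I - \bar K_{n+1} H : \cH \to \cH$.

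To assemble the update, observe that the datum $y_{n+1} \in \bbR^m$ is deterministic while $\eta_{n+1} \sim N(0,\Gamma)$ has all moments finite in $\bbR^m$, so $\tilde y_{n+1} = y_{n+1} + \eta_{n+1} \in L^p(\Omega;\bbR^m)$ for every $p$. Applying the bounded maps just obtained to $\hat{\bar v}_{n+1} = (I - \bar K_{n+1} H) \bar v_{n+1} + \bar K_{n+1} \tilde y_{n+1}$,
\[
\| \hat{\bar v}_{n+1} \|_{L^p(\Omega;\cH)} \leq \| I - \bar K_{n+1} H \| \, \| \bar v_{n+1} \|_{L^p(\Omega;\cH)} + \| \bar K_{n+1} \| \, \| \tilde y_{n+1} \|_{L^p(\Omega;\bbR^m)} < \infty,
\]
which closes the induction and hence the proof for all $p \geq 2$, and by inclusion for all $p$.

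The only genuinely delicate point is the well-posedness of the Kalman update at the mean-field level, namely that $\bar S_{n+1}$ is boundedly invertible and $\bar K_{n+1}$ is a bona fide bounded operator $\bbR^m \to \cH$; this is precisely where Proposition~\ref{prop:fincov} enters, guaranteeing that $\bar c_{n+1}$ is a positive bounded operator so that $\bar S_{n+1} - \Gamma$ is positive semi-definite with $\Gamma$ strictly positive definite. Everything else is routine propagation of $L^p$ bounds through bounded linear maps together with the growth estimate in Assumption~\ref{ass:psilip}(ii).
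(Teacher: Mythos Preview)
Your proof is correct and follows essentially the same route as the paper's: induction on $n$, using Assumption~\ref{ass:psilip}(ii) for the prediction step, invoking Proposition~\ref{prop:fincov} to ensure $\bar c_{n+1}\in\cH\otimes\cH$, then positive semi-definiteness of $H\bar c_{n+1}H^*$ together with $\Gamma>0$ to get a bounded gain, and finally pushing the $L^p$ bound through the bounded linear update map. The only minor difference is that for the base case you appeal to Fernique's theorem for a Gaussian prior, whereas the paper simply takes $\hat{\bar v}_0\in L^p(\Omega;\cH)$ as part of the standing setup; either way the induction seed is immediate.
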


\begin{proof}
Clearly it holds for time $n=0$.  
Given $\hat{\bar{v}}_n\in L^p(\Omega;\cH)$, Assumption \ref{ass:psilip} (ii)
guarantees $\bar{v}_{n+1} \in L^p(\Omega;\cH)$.
By Proposition \ref{prop:fincov}, $\mfc_{n+1} \in \cH\otimes \cH$.
Since $H\mfc_{n+1}H^* \geq 0$ and $\Gamma>0$, it is clear that 
$\bar S_{n+1}>0$, which implies 
$\| H^* \bar S_{n+1}^{-1} \|_{\cH \otimes \cR_m}<\infty$.
Hence, $\mfk_{n+1} \in \cH \otimes \cR_m$.  
Therefore it is clear that $\hat{\bar{v}}_{n+1} \in L^p(\Omega;\cH)$.
\end{proof}

\section{Theoretical Results}
\label{sec:theory}

The approximation error and
computational cost of approximating the true 
filtering distribution by MLEnKF when given a sequence of 
observations $y_1, y_2, \ldots, y_n$ 
will be studied in this section. 
Before stating the main approximation theorem, it will be useful to 
present the basic assumptions that will be used throughout and the 
corresponding standard MLMC approximation results for i.i.d. samples,
as well as a slight variant which will be useful in what follows.

\begin{definition}\label{def:globalLip} {A function $\varphi: \cH \to
    \bbR$ is said to be globally Lipschitz continuous provided there
    exist a positive scalar $C_\varphi <\infty$ {such that for all $u,v\in \cH$}
\begin{equation}\label{eq:localLipschitz}
\abs{\varphi(u) -\varphi(v)} \leq C_\varphi \hNorm{u-v}.
\end{equation}}
\end{definition}

\begin{assumption} 
\label{ass:mlrates}
Consider the hidden Markov model defined by \eqref{eq:psiDefinition}
and \eqref{eq:obdef} with initial data $u_0 \in L^p(\Omega;\cH)$ for all
{$p \geq 2$} and assume that the sequence of resolution dimensions $\{N_\ell\}$
fulfils the exponential growth constraint $N_\ell \eqsim \kappa^\ell$,
for some $\kappa >1$.
Let $\psiL$ denote a numerical solver with a resolution parameter
{$h_\ell \eqsim N_\ell^{-1/d}$.}
This will define the hierarchy of solution operators in 
Section~\ref{sec:kalman}, which are assumed to 
satisfy Assumption \ref{ass:psilip}. 
 {For} a given set of constants $\beta,
\gamma>0$, assume the following conditions are fulfilled 
for all $\ell \geq 0$ and $u,v \in L^p(\Omega; \cH)$ for all $p\ge 2$:
\begin{enumerate}

\item[(i)] $\| \psiL(u) - \Psi(u) \|_{L^p(\Omega; \cH)} 
\lesssim 
h_\ell^{\beta/2}$,
\text{ for all }$p\ge 2$, 

\item[(ii)] $\|(I- \cP_\ell) u_0 \|_{L^p(\Omega; \cH)} \lesssim h_\ell^{\beta/2}$, {\text{ for all }$p\ge 2$,}


\item[(iii)] 
  $\costL  \lesssim h_\ell^{-d \gamma}$, where $\costL$ denotes the computational cost associated to level $\ell$
  (and $d$ is the spatiotemporal dimension of the continuum which is being approximated)\footnote{This
  can be made much more general, but the objective here is simplicity
  of exposition}.

 \end{enumerate}
\end{assumption}

Assumption~\ref{ass:mlrates} is given in a bare-minimum form, which we
believe will be easier to verify when applying the method to particular
problems. The next corollary states direct consequences of the above
assumption, which will be useful for proving properties of the MLEnKF
method.

\begin{proposition}\label{prop:asimp}
  Suppose Assumption~\ref{ass:mlrates} holds and $\Psi^\ell = \cP_\ell\Psi$.
  {Then for all $\ell \in \bbN$, $u,v \in  L^p(\Omega; \cH)$ for all $p\ge 2$,
  and globally Lipschitz continuous observables $\varphi$}:
\begin{itemize}
\item[(i)] 
$\| \psiL(v) - \Psi^{\ell-1}(v) \|_{L^p(\Omega; \cH)} 
\lesssim h_\ell^{\beta/2}$,
for all $p\ge 2$, 
\item[(ii)] $\abs{ \E{\varphi(\psiL(u)) - \varphi(\Psi(v)) }} \lesssim \|u-v\|_{L^p(\Omega; \cH)} + h_\ell^{\beta/2}$,
{ for all $p \ge 2$},
\item[(iii)] $\|(I- \cP_\ell) \bar C_n \|_{\cH\otimes \cH} \lesssim h_\ell^{\beta/2}$.
\end{itemize}
\end{proposition}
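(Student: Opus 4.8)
The plan is to reduce all three items to triangle-inequality estimates in $L^p(\Omega;\cH)$, using the Lipschitz bounds of Assumption~\ref{ass:psilip} and Definition~\ref{def:globalLip} together with the projection/discretization error rates of Assumption~\ref{ass:mlrates}(i)--(ii). For (i) I would insert $\Psi(v)$ and write $\psiL(v) - \Psi^{\ell-1}(v) = (\psiL(v) - \Psi(v)) - (\Psi^{\ell-1}(v) - \Psi(v))$, bounding the two summands by Assumption~\ref{ass:mlrates}(i) at resolutions $\ell$ and $\ell-1$ to obtain $\lesssim h_\ell^{\beta/2} + h_{\ell-1}^{\beta/2}$; since $h_\ell \eqsim N_\ell^{-1/d} \eqsim \kappa^{-\ell/d}$ one has $h_{\ell-1}^{\beta/2} \eqsim h_\ell^{\beta/2}$, so the two terms collapse into a single $h_\ell^{\beta/2}$ (this is valid for $\ell \ge 1$, which is the content of the statement, and does not even require the specialization $\Psi^\ell = \cP_\ell \Psi$). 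For (ii), $|\E{\cdot}| \le \E{|\cdot|}$, global Lipschitz continuity of $\varphi$, and monotonicity of the $L^q(\Omega;\cH)$ norms give $\abs{\E{\varphi(\psiL(u)) - \varphi(\Psi(v))}} \le C_\varphi\,\E{\hNorm{\psiL(u) - \Psi(v)}} \le C_\varphi \|\psiL(u) - \Psi(v)\|_{L^p(\Omega;\cH)}$; inserting $\Psi(u)$ and applying Assumption~\ref{ass:mlrates}(i) to $\|\psiL(u) - \Psi(u)\|_{L^p(\Omega;\cH)}$ and Assumption~\ref{ass:psilip}(i) to $\|\Psi(u) - \Psi(v)\|_{L^p(\Omega;\cH)} \le c_\Psi \|u-v\|_{L^p(\Omega;\cH)}$ then yields the claim.

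For (iii), recall $\bar C_n$ is the MFEnKF covariance $\mfc_n$ of \eqref{eq:mfpredict}; writing $\delta \coloneq \bar v_n - \bar m_n$, Proposition~\ref{prop:reg} and Jensen's inequality give $\delta \in L^2(\Omega;\cH)$, and $\bar C_n = \E{\delta \otimes \delta} \in \cH\otimes\cH$ by Proposition~\ref{prop:fincov}. Since $\cP_\ell$ is a bounded linear operator it commutes with the Bochner expectation, and under the isometric isomorphism $\cH \otimes \cH \cong HS(\cH,\cH)$ of Section~\ref{ssec:dets} the left action $C \mapsto (I - \cP_\ell)C$ coincides with applying $I - \cP_\ell$ to the left tensor slot; hence $(I - \cP_\ell)\bar C_n = \E{[(I - \cP_\ell)\delta] \otimes \delta}$. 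Using \eqref{eq:hsdef}, the triangle inequality for Bochner integrals, and Cauchy--Schwarz,
\[
\|(I - \cP_\ell)\bar C_n\|_{\cH\otimes\cH} \le \E{\hNorm{(I - \cP_\ell)\delta}\,\hNorm{\delta}} \le \|(I - \cP_\ell)\delta\|_{L^2(\Omega;\cH)}\,\|\delta\|_{L^2(\Omega;\cH)},
\]
with the last factor finite. It then remains to show $\|(I - \cP_\ell)\delta\|_{L^2(\Omega;\cH)} \lesssim h_\ell^{\beta/2}$: since $\bar m_n = \E{\bar v_n}$ and $\cP_\ell$ is linear, $\|(I - \cP_\ell)\delta\|_{L^2(\Omega;\cH)} \le 2\|(I - \cP_\ell)\bar v_n\|_{L^2(\Omega;\cH)}$, and for $n \ge 1$ we have $\bar v_n = \Psi(\hat{\bar v}_{n-1})$ with $\hat{\bar v}_{n-1} \in L^p(\Omega;\cH)$ by Proposition~\ref{prop:reg}, so under the standing hypothesis $\Psi^\ell = \cP_\ell \Psi$ one has $(I - \cP_\ell)\bar v_n = -(\psiL - \Psi)(\hat{\bar v}_{n-1})$, which is $\lesssim h_\ell^{\beta/2}$ in $L^2(\Omega;\cH)$ by Assumption~\ref{ass:mlrates}(i); for $n = 0$ one invokes Assumption~\ref{ass:mlrates}(ii) on $u_0$ instead.

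The main obstacle, such as it is, lies entirely in item (iii): one must justify carefully the identification $(I - \cP_\ell)\bar C_n = \E{[(I - \cP_\ell)\delta] \otimes \delta}$ through the isometry $\cH\otimes\cH \cong HS$ of Section~\ref{ssec:dets}, the interchange of $I - \cP_\ell$ with the expectation, and the passage from the Hilbert--Schmidt norm to the $L^2(\Omega;\cH)$ Cauchy--Schwarz bound. Items (i) and (ii) are routine once the hypotheses are unwound; moreover one only ever applies Assumption~\ref{ass:mlrates}(i) at the fixed elements $v$, $u$, and $\hat{\bar v}_{n-1}$, so no uniformity of the hidden constant over $L^p(\Omega;\cH)$ is needed beyond what that assumption already supplies.
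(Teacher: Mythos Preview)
Your proposal is correct and follows essentially the same route as the paper: Minkowski/triangle inequalities plus Assumption~\ref{ass:mlrates}(i) for (i); global Lipschitz of $\varphi$ together with Assumptions~\ref{ass:psilip}(i) and~\ref{ass:mlrates}(i) for (ii); and for (iii) the identification $(I-\cP_\ell)\bar C_n = \E{[(I-\cP_\ell)\delta]\otimes\delta}$ followed by Jensen/H\"older (your Cauchy--Schwarz) and the specialization $\Psi^\ell=\cP_\ell\Psi$ to invoke Assumption~\ref{ass:mlrates}(i) on $(I-\cP_\ell)\bar v_n$. Your treatment is in fact slightly more careful than the paper's, in that you spell out the commutation with the Bochner integral and separately handle the base case $n=0$ via Assumption~\ref{ass:mlrates}(ii).
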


\begin{proof}
Property (i) follows from Assumption \ref{ass:mlrates}(i) and Minkowski's inequality.  
{Property (ii) follows from Definition~\ref{def:globalLip}, 
followed by Minkowski's inequality, Assumption \ref{ass:psilip}(i), and Assumption \ref{ass:mlrates}(i).}
For property (iii), recall Proposition \ref{prop:reg}, 
and without loss of generality assume {$\E{ \bar{v}_n}=0$} 
(for simplicity of the argument to follow).  
Now observe
\[
\|(I-\cP_\ell) \bar{C}_n\|_{\cH\otimes \cH} = 
\|\bbE[(I-\cP_\ell) \bar{v}_n \otimes \bar{v}_n] \|_{\cH\otimes \cH} 
\leq \|(I-\cP_\ell) \bar{v}_n\|_2 \| \bar{v}_n \|_2,
\] 
where the inequality is a result of Jensen's inequality, the definition \eqref{eq:hsdef},
and H\"older's inequality.  Notice that 
$(I-\cP_\ell) \bar{v}_n = (I-\cP_\ell)\Psi(\hat{\bar{v}}_{n-1})$.
Since it is assumed that $\Psi^\ell = \cP_\ell\Psi$, the claim follows
from Assumption \ref{ass:mlrates}(i) again.
\end{proof}


\begin{remark}
It will be assumed that the computational cost of the forward
simulation, Cost$(\Psi^\ell) =\cO(h_\ell^{-d\gamma})$ is at least
linear in {$N_\ell$, i.e., that $\gamma \ge 1$}, and that $m\ll N_0$.  Therefore, in
view of Proposition \ref{prop:covcost}, the total cost is dominated by
$\cC_\ell=\cO(h_\ell^{-d\gamma})$. It is important to observe that in
the big data case $m {\ge} N_0$, the algorithm
  will need to be modified to be efficient in the non-asymptotic regime when
the accuracy constraint $\varepsilon$, relatively speaking, is large.
{For larger values of $m$, 
smaller $\varepsilon$ regimes will be affected.}
\end{remark}

We will now state the main theorem of this paper. It gives an upper
bound for the computational cost of achieving a sought accuracy in
$L^p(\Omega; \cH)$-norm when using the MLEnKF method to approximate
the expectation of an observable.  The theorem may be considered an
extension to spatially extended models of the earlier work
\cite{ourmlenkf}.  

\begin{theorem}[MLEnKF accuracy vs. cost]\label{thm:main}
{Consider a globally Lipschitz continuous observable function
$\varphi:\cH \to \bbR$, and suppose
Assumptions~\ref{ass:psilip} and~\ref{ass:mlrates} hold. For a given
{$\varepsilon >0$}, let $L$ and $\{M_\ell\}_{\ell=0}^L$ be defined under the
constraints $L = \lceil 2\log_\kappa(\varepsilon^{-1})/\beta\rceil$ and}
\begin{equation}\label{eq:chooseMlr}
{M_\ell \eqsim  
 \begin{cases} 
    h_\ell^{(\beta+d\gamma)/2}  h^{-\beta}_L, & \text{if} \quad \beta > d\gamma, \\
   h_\ell^{(\beta+d\gamma)/2}  L^2 h^{-\beta}_L, & \text{if} \quad  \beta = d\gamma, \\
   h_\ell^{(\beta + d\gamma)/2} h^{-(\beta + d\gamma)/2}_L, &\text{if} \quad \beta< d\gamma. 
 \end{cases}}
\end{equation}
Then, {for any $p\ge 2$ and $n \in \bbN$},
\begin{equation}
\|\hat \mu^{\rm ML}_n (\varphi) - \hat \mfmu_n (\varphi) \|_{L^p(\Omega;\cH)} \lesssim |\log(\varepsilon)|^n\varepsilon,
\label{eq:lperror}
\end{equation}
where $\hat \mu^{\rm ML}_n$ denotes the multilevel empirical measure defined in
\eqref{eq:mlemp} whose particle evolution is given by the multilevel predict
\eqref{eq:DlvHatDef} and update \eqref{eq:upsamps} formulae,
approximating the time $n$ mean-field EnKF distribution $\hat \mfmu_n$
 (the filtering distribution $\hat \mfmu_n=N(\hm_n, \hc_n)$ in the linear Gaussian case).

The computational cost of the MLEnKF estimator over the time
sequence is bounded by 
\begin{equation}\label{eq:mlenkfCosts2}
\cost{\mathrm{MLEnKF}} \lesssim 
\begin{cases}
\varepsilon^{-2}, & \text{if} \quad \beta > d\gamma,\\              
\varepsilon^{-2} \abs{\log(\varepsilon)}^3, & \text{if} \quad \beta = d\gamma,\\
\varepsilon^{- 2d\gamma/\beta}, & \text{if} \quad \beta < d\gamma.
\end{cases}
\end{equation}
\end{theorem}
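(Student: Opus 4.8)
The plan is to prove the $L^p$ bound \eqref{eq:lperror} by induction on the observation index $n$, comparing the MLEnKF ensemble to an auxiliary family of realizations of the mean-field process and invoking the standard MLMC estimates recalled above. For each level $\ell$ and sample index $i$, introduce $\bar v^{\ell}_{n}(\omega_{\ell,i})$, $\hat{\bar v}^{\ell}_{n}(\omega_{\ell,i})$, the level-$\ell$ prediction and update produced by running \eqref{eq:mfpredict}--\eqref{eq:mfupdate} with the \emph{same} solver $\psiL$ and projection $\cP_\ell$ as the MLEnKF level-$\ell$ particles but with the \emph{exact} gain $\mfk_n$, on the same noise $\omega_{\ell,i}$ that drives $\hv^{\ell}_{n}(\omega_{\ell,i})$. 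A preliminary induction, using Assumption~\ref{ass:psilip}(ii), Proposition~\ref{prop:reg}, and the uniform bound $\|(S^{\rm ML}_{n+1})^{-1}\|\le\|\Gamma^{-1}\|$ (valid because the truncation \eqref{eq:covzee} forces $S^{\rm ML}_{n+1}=H\tilde{R}^{\rm ML}_{n+1}+\Gamma\ge\Gamma>0$), shows that every MLEnKF particle and every auxiliary particle lies in $L^q(\Omega;\cH)$ for all $q$, uniformly in the ensemble sizes, and that $\|\hat{\bar v}^{\ell}_{n}-\hat{\bar v}^{\ell-1}_{n}\|_{L^q(\Omega;\cH)}\lesssim h_\ell^{\beta/2}$ (using Proposition~\ref{prop:asimp}(i),(iii)). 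Because $\mfk_n$ is deterministic, for each fixed $\ell$ the pairs $(\hat{\bar v}^{\ell}_{n,i},\hat{\bar v}^{\ell-1}_{n,i})_i$ are i.i.d.\ and the different levels are independent, so $\bar A^{\rm ML}_n(\varphi):=\sum_{\ell=0}^{L}M_\ell^{-1}\sum_{i=1}^{M_\ell}\big(\varphi(\hat{\bar v}^{\ell}_{n,i})-\varphi(\hat{\bar v}^{\ell-1}_{n,i})\big)$ is a genuine MLMC estimator of $\hat{\bar\mu}_n(\varphi)=\bbE[\varphi(\hat{\bar v}_n)]$, and the standard bias/variance estimates under the prescribed $L$ and $\{M_\ell\}$ give $\|\bar A^{\rm ML}_n(\varphi)-\hat{\bar\mu}_n(\varphi)\|_{L^p(\Omega)}\lesssim\varepsilon$. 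By the triangle inequality, \eqref{eq:lperror} then reduces to bounding $\|\hat\mu^{\rm ML}_n(\varphi)-\bar A^{\rm ML}_n(\varphi)\|_{L^p(\Omega)}$.

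The crucial observation is that at each level $\ell$ the MLEnKF particle and the auxiliary particle use the \emph{same} solver and projection, so their discrepancy is driven \emph{only} by the difference between the empirical gain $\kml{n}$ and the exact gain $\mfk_n$. Set $G_n:=\sum_{\ell=0}^{L}\max_i\|\hv^{\ell}_{n}(\omega_{\ell,i})-\hat{\bar v}^{\ell}_{n}(\omega_{\ell,i})\|_{L^p(\Omega;\cH)}$. For the inductive step: applying $\psiL$ and Assumption~\ref{ass:psilip}(i) bounds the level-$\ell$ prediction discrepancy by $c_\Psi\max_i\|\hv^{\ell}_{n}-\hat{\bar v}^{\ell}_{n}\|$, so the propagated error of the multilevel estimators $\mml_{n+1}$ and $R^{\rm ML}_{n+1}=\cml_{n+1}H^*$ relative to their auxiliary counterparts is $\lesssim c_\Psi G_n$ by the uniform $L^q$ bounds, while their MLMC sampling error relative to $(\mfm_{n+1},\mfc_{n+1}H^*)$ is $\lesssim\varepsilon$ under the prescribed $\{M_\ell\}$ (using $\mathrm{Var}(\hat{\bar v}^{\ell}-\hat{\bar v}^{\ell-1})\lesssim h_\ell^{\beta}$ from the preliminary bound). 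Since $S^{\rm ML}_{n+1}\ge\Gamma>0$ and $\bar S_{n+1}\ge\Gamma>0$ (Proposition~\ref{prop:reg}), and discarding the negative eigenvalues of $HR^{\rm ML}_{n+1}$ changes it by at most $\|HR^{\rm ML}_{n+1}-H\mfc_{n+1}H^*\|$, the gain is a Lipschitz function of these moments on the relevant range, so $\|\kml{n+1}-\mfk_{n+1}\|_{L^p(\Omega;\cH\otimes\cR_m)}\lesssim G_n+\varepsilon$. Feeding this, the prediction bound, and boundedness of $I-\cP_\ell\kml{n+1}H$ into \eqref{eq:upsamps} minus the level-$\ell$ auxiliary update gives $\max_i\|\hv^{\ell}_{n+1}-\hat{\bar v}^{\ell}_{n+1}\|\lesssim c_\Psi\max_i\|\hv^{\ell}_{n}-\hat{\bar v}^{\ell}_{n}\|+C(G_n+\varepsilon)$, and summing over the $L+1\eqsim|\log\varepsilon|$ levels yields $G_{n+1}\lesssim|\log\varepsilon|\,(G_n+\varepsilon)$. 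Finally, global Lipschitz continuity of $\varphi$ (Definition~\ref{def:globalLip}) gives $\|\hat\mu^{\rm ML}_{n+1}(\varphi)-\bar A^{\rm ML}_{n+1}(\varphi)\|_{L^p(\Omega)}\lesssim C_\varphi\sum_{\ell=0}^{L}\big(\max_i\|\hv^{\ell}_{n+1}-\hat{\bar v}^{\ell}_{n+1}\|+\max_i\|\hv^{\ell-1}_{n+1}-\hat{\bar v}^{\ell-1}_{n+1}\|\big)\lesssim G_{n+1}$, with no further logarithmic loss.

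Unrolling $G_{n+1}\lesssim|\log\varepsilon|\,(G_n+\varepsilon)$ over the fixed finite horizon, with $G_0\lesssim\varepsilon$ (the initial ensemble coinciding with the auxiliary one up to the projection error of Assumption~\ref{ass:mlrates}(ii)), gives $G_n\lesssim|\log\varepsilon|^n\varepsilon$, and combining with the MLMC estimate for $\bar A^{\rm ML}_n(\varphi)$ yields $\|\hat\mu^{\rm ML}_n(\varphi)-\hat{\bar\mu}_n(\varphi)\|_{L^p(\Omega)}\lesssim|\log\varepsilon|^n\varepsilon$, which is \eqref{eq:lperror}; the implied constant may depend on $n$, $c_\Psi$, $\Gamma$, $C_\varphi$ but not on $\varepsilon$. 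In the linear Gaussian case the recursion \eqref{eq:mfpredict}--\eqref{eq:mfupdate} coincides with the Kalman mean/covariance recursion, the perturbed-observation update preserving Gaussianity, so $\hat{\bar\mu}_n=N(\hm_n,\hc_n)$ is the exact filtering distribution, the infinite-dimensional manipulations being legitimate since $\mfc_n$ is trace-class by Propositions~\ref{prop:fincov} and~\ref{prop:reg}. For the cost \eqref{eq:mlenkfCosts2}, Proposition~\ref{prop:covcost} and the subsequent Remark give level-$\ell$ work per time step $\lesssim M_\ell\costL\lesssim M_\ell h_\ell^{-d\gamma}$, hence over the finite horizon $\cost{\mathrm{MLEnKF}}\lesssim\sum_{\ell=0}^{L}M_\ell h_\ell^{-d\gamma}$; inserting the three cases of \eqref{eq:chooseMlr}, summing the resulting geometric (respectively $\ell$-weighted) series in $\ell$, and using $L\eqsim|\log\varepsilon|$ together with the choice of $L$ reproduces the classical MLMC complexities $\varepsilon^{-2}$, $\varepsilon^{-2}|\log\varepsilon|^{3}$, $\varepsilon^{-2d\gamma/\beta}$ in the regimes $\beta>d\gamma$, $\beta=d\gamma$, $\beta<d\gamma$ respectively.

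The main obstacle is the feature flagged after \eqref{eq:upsamps}: unlike standard MLMC, the Kalman update couples all $L+1$ level-ensembles through the single empirical gain $\kml{n+1}$, so the level estimators are neither mutually independent nor unbiased for the mean-field increments, and the additivity of MLMC variances is not directly available. The device resolving this — coupling to the auxiliary family built from the \emph{same} per-level solver and projection but the \emph{exact} gain — localizes the defect in the scalar quantity $\kml{n+1}-\mfk_{n+1}$, but since that defect enters \emph{every} level, the summation over the $\eqsim|\log\varepsilon|$ levels produces one logarithmic factor per observation step, hence the $|\log\varepsilon|^n$ in \eqref{eq:lperror}. The remaining delicate points are the uniform-in-ensemble $L^q$-boundedness of the particles (needed so that the covariance and gain estimates are Lipschitz with $\varepsilon$-independent constants), verifying that the positive-semidefinite truncation \eqref{eq:covzee} does not degrade the gain estimate below the MLMC accuracy, and keeping all constants uniform in $p$.
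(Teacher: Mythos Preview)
Your proposal is correct and follows essentially the same route as the paper: you introduce the same auxiliary mean-field multilevel ensemble (same per-level solver, projection, and noise, but exact gain $\mfk_n$), split via the triangle inequality into the ensemble discrepancy, the i.i.d.\ MLMC error of the auxiliary estimator, and the discretization bias, and close by an induction on the summed per-level ensemble distance $G_n$ with the recursion $G_{n+1}\lesssim|\log\varepsilon|(G_n+\varepsilon)$, which is exactly the content of the paper's Lemma~\ref{lem:ensdist}. One small point of care: the operator $I-\cP_\ell\kml{n+1}H$ is random, so ``boundedness'' should be read either as $L^q$-boundedness combined with a H\"older shift to $2p$-norms (as the paper does after \eqref{eq:appearanceOfCn}), or, more cleanly, arrange the update difference so that the deterministic $I-\cP_\ell\mfk_{n+1}H$ multiplies $v^\ell_{n+1}-\bar v^\ell_{n+1}$ and only the gain increment $\kml{n+1}-\mfk_{n+1}$ multiplies the bounded factor; either way the step goes through.
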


\hh{The proof follows very closely that of~\cite[Theorem 3.2]{ourmlenkf}}, except
here it is extended to the Hilbert space setting with approximation of
spatially extended models.


Following \cite{ourmlenkf} and \cite{le2011large, law2014deterministic, mandel2011convergence}, 
introduce the mean-field limiting multilevel ensemble
$\{(\mfv^{\ell}_{n,i})_{i=1}^{M_\ell}\}_{\ell=1}^L$, which
evolves according to the same equations {\it with the same
  realizations of noise} except the covariance $\mfc_n$, \hh{and} hence the
Kalman gain $\mfk_n$, are given by limiting formulae
in~\eqref{eq:mfpredict} and~\eqref{eq:mfupdate}.  
An ensemble member $\mfv^{\ell}$ corresponds to a solution of the
above system with
$\mfv^\ell_{n+1} = \psiL (\hmfv^\ell_n)$ replacing the
first equation and the equation
\[
\hmfv^\ell_{n+1}=(I-\cP_\ell \mfk_{n+1}H) \mfv^\ell_{n+1}+\cP_\ell \mfk_{n+1}{\tilde y}_{n+1}^{\ell}
\]
replacing the last equation.  The sample $\mfv^{\ell}(\omegaLI)$ is a
single realization of this system above with the same noise
realization $\omegaLI$ as the sample $v^{\ell}(\omegaLI)$ from MLEnKF,
including the perturbed observation.  
Note that the processes
$\mfv^{\ell}$, $\hmfv^{\ell}$ are bounded in $L^p(\Omega; \cH)$ 
as well by similar arguments to Proposition \ref{prop:reg}.


Let us first recall that the multilevel Kalman gain is defined by
\begin{equation}
K^{\rm ML}_n = R^{\rm ML}_n 
(H \tilde{R}^{\rm ML}_n 
+ \Gamma)^{-1}
\nonumber
\end{equation}
where 
\begin{equation}
H \tilde{R}^{\rm ML}_n = \sum_{i=1; \lambda_i > 0}^m \lambda_i q_i q_i^\transpose,
\label{eq:covzee1}
\end{equation}
for eigenpairs $\{\lambda_i,q_i\}$ of $H R^{\rm ML}_n$.
The following micro-lemma will be necessary to control the error in the gain.
\begin{lemma}[Multilevel covariance approximation error]  
\label{lem:mlcae}
Let $\tilde{R}^{\rm ML}_n$ be given by~\eqref{eq:covzee1}. Then 
there is a $0<c<\infty$ such that  
\begin{equation}
\rrNorm{H(\tilde{R}^{\rm ML}_n - {R}^{\rm ML}_n)} \leq c \hhNorm{C^{\rm ML}_n - \mfc_n}.
\label{eq:mlcov_bound}
\end{equation}
\end{lemma}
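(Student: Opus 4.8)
The plan is to recognise $H(\tilde{R}^{\rm ML}_n - R^{\rm ML}_n)$ as (minus) the negative spectral part of the symmetric $m\times m$ matrix $A := H R^{\rm ML}_n = H C^{\rm ML}_n H^*$, and then to exploit that the mean-field counterpart $B := H \mfc_n H^*$ is positive semi-definite in order to dominate that negative part by $|A-B|_{HS}$. Since $A - B = H(\mfc_n - C^{\rm ML}_n) H^*$, boundedness of $H$ then converts this into the asserted bound in terms of $\hhNorm{C^{\rm ML}_n - \mfc_n}$.

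Concretely, I would fix an orthonormal eigenbasis $\{q_i\}_{i=1}^m$ of $\bbR^m$ with $A q_i = \lambda_i q_i$. By the definition~\eqref{eq:covzee1} of $\tilde{R}^{\rm ML}_n$ one has $H\tilde{R}^{\rm ML}_n = \sum_{\lambda_i>0}\lambda_i q_i q_i^\transpose$, hence $H(\tilde{R}^{\rm ML}_n - R^{\rm ML}_n) = -\sum_{\lambda_i<0}\lambda_i q_i q_i^\transpose$ and
\[
\rrNorm{H(\tilde{R}^{\rm ML}_n - R^{\rm ML}_n)} \;\le\; |H(\tilde{R}^{\rm ML}_n - R^{\rm ML}_n)|_{HS} \;=\; \Big(\sum_{i:\,\lambda_i<0}\lambda_i^2\Big)^{1/2}.
\]
Because $\mfc_n$ is a covariance operator (Proposition~\ref{prop:reg} and Proposition~\ref{prop:fincov}), $\mfc_n\ge 0$, so $B = H\mfc_n H^*\ge 0$; consequently, for each index with $\lambda_i<0$,
\[
0 \;\le\; -\lambda_i \;=\; q_i^\transpose(B-A)q_i - q_i^\transpose B q_i \;\le\; q_i^\transpose(B-A)q_i \;\le\; \|(B-A)q_i\|_{\cR_m},
\]
the last step by Cauchy--Schwarz since $\|q_i\|_{\cR_m}=1$. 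Squaring and summing over all $i$ (the discarded indices only add nonnegative terms, and $\{q_i\}$ is an orthonormal basis) gives $\sum_{\lambda_i<0}\lambda_i^2 \le \sum_{i=1}^m\|(B-A)q_i\|_{\cR_m}^2 = |B-A|_{HS}^2$.

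It then remains to estimate $|B-A|_{HS} = |H(\mfc_n - C^{\rm ML}_n)H^*|_{HS}$. Using that $H$ (hence $H^*$) is a bounded linear operator, together with submultiplicativity of the Hilbert--Schmidt norm under composition with bounded operators and the isometry $\|\cdot\|_{\cH\otimes\cH}=|\cdot|_{HS}$ recorded in Section~\ref{ssec:dets}, one obtains $|H(\mfc_n - C^{\rm ML}_n)H^*|_{HS}\le \|H\|^2\,\hhNorm{C^{\rm ML}_n - \mfc_n}$, and chaining the displays yields~\eqref{eq:mlcov_bound} with $c=\|H\|^2$. I do not anticipate a genuine obstacle here: once $H(\tilde{R}^{\rm ML}_n - R^{\rm ML}_n)$ is identified with the negative part of $H R^{\rm ML}_n$, everything reduces to elementary finite-dimensional linear algebra, the only point worth flagging being that the estimate crucially uses positive semi-definiteness of the \emph{mean-field} covariance $\mfc_n$ — which supplies a nearby point of the positive-semidefinite cone — rather than any (absent) positivity of the multilevel covariance $C^{\rm ML}_n$ itself.
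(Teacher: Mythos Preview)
Your proof is correct and follows essentially the same approach as the paper: both identify $H(\tilde{R}^{\rm ML}_n - R^{\rm ML}_n)$ with the negative spectral part of the symmetric matrix $H C^{\rm ML}_n H^*$ and bound it by the distance to the positive semi-definite matrix $H\mfc_n H^*$, then pass through boundedness of $H$. The only cosmetic difference is that the paper isolates the single most negative eigenvalue and invokes equivalence of norms on $\bbR^{m\times m}$, whereas you sum all negative eigenvalues directly in the Hilbert--Schmidt norm; your version is slightly more direct and avoids the dimension-dependent constant from norm equivalence, but the idea is the same.
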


\begin{proof}
Notice that, by equivalence of $2$ and HS norms over $\bbR^m$, there exists a $0<\tilde c<\infty$ such that 
\begin{equation}
\rrNorm{H(\tilde{R}^{\rm ML}_n - {R}^{\rm ML}_n)} \leq   \tilde c {\rm max}_{\{j; \lambda_j<0\}} \{|\lambda_j|\}.
\end{equation}
Denote the associated eigenvector by $q_{\rm max}$ (normalized to 
{$\|q_{\rm max}\|=1$).
Notice that for any $A=A^\transpose\in \bbR^{m\times m}$ we can define 
\[
\|A\| := {\rm sup}_q \frac{|q^\transpose A q|}{\|q\|^2} = {\rm max}_i |\lambda_i|,
\]}
where $\lambda_i$ are the eigenvalues of $A$.

Since $\mfc_n \geq 0$, one has that 
\begin{eqnarray*}
\abs{q_{\rm max}^\transpose H ({C}^{\rm ML}_n - \mfc_n) H^* q_{\rm max}}  & = &
q_{\rm max}^\transpose H \mfc_n H^* q_{\rm max}
- q_{\rm max}^\transpose H C^{\rm ML}_n H^* q_{\rm max} \\
&\geq& \frac{1}{{\tilde c}} \rrNorm{H(\tilde{R}^{\rm ML}_n - {R}^{\rm ML}_n)}.
\end{eqnarray*}
The fact that for self-adjoint $Q:\cH \rightarrow \cH$ one has 
$\rrNorm{HQH^*} \leq \rhNorm{H}^2 \hhNorm{Q}$ concludes the proof.
\end{proof}

The next step is to bound the Kalman gain error in terms of the
covariance error.
\begin{lemma}[Kalman gain error]
\label{lem:gce}
There is a constant $\tilde c_n<\infty$, depending on $\rhNorm{H}, \gamma_{\rm min}$, 
and $\hhNorm{\mfk_n H}$ such that 
\begin{equation}
\hrNorm{K^{\rm ML}_n - \mfk_n}
\leq \tilde c_n \hhNorm{{C}^{\rm ML}_{n}-\mfc_{n}}. 
\label{eq:gaincov}
\end{equation}
\end{lemma}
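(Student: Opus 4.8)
The plan is a resolvent-identity perturbation argument, the only genuinely new ingredient being Lemma~\ref{lem:mlcae}, which will absorb the error caused by the eigenvalue truncation in \eqref{eq:covzee1}. Introduce the local shorthand $R := R^{\rm ML}_n = C^{\rm ML}_n H^*$, $\bar R := \mfc_n H^*$, $S := S^{\rm ML}_n = H\tilde{R}^{\rm ML}_n + \Gamma$ and $\bar S := \bar S_n = H\mfc_n H^* + \Gamma$, so that $K^{\rm ML}_n = R S^{-1}$ and $\mfk_n = \bar R \bar S^{-1}$. Adding and subtracting $\bar R S^{-1}$ and applying the resolvent identity $S^{-1} - \bar S^{-1} = \bar S^{-1}(\bar S - S)S^{-1}$ together with $\bar R \bar S^{-1} = \mfk_n$ gives the identity
\[
K^{\rm ML}_n - \mfk_n = (R - \bar R)\,S^{-1} + \mfk_n\,(\bar S - S)\,S^{-1},
\]
and, since $H R^{\rm ML}_n = H C^{\rm ML}_n H^*$, one has $\bar S - S = H(\mfc_n - C^{\rm ML}_n)H^* + H(R^{\rm ML}_n - \tilde{R}^{\rm ML}_n)$.

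The first thing to record — and, in my view, the real crux rather than a routine step — is the \emph{unconditional} invertibility of the innovation operators: because the truncation in \eqref{eq:covzee1} discards the negative eigenvalues, $H\tilde{R}^{\rm ML}_n \ge 0$, hence $S \ge \Gamma > 0$ and $\rrNorm{S^{-1}} \le 1/\gamma_{\rm min}$; likewise $\rrNorm{\bar S^{-1}} \le 1/\gamma_{\rm min}$ since $H\mfc_n H^* \ge 0$. With this in hand I would bound the two terms by submultiplicativity of the Hilbert–Schmidt norm against the operator norm, keeping the $\cH$-valued factors in the Hilbert–Schmidt norm and the $m\times m$ resolvents in the operator norm throughout: using $R - \bar R = (C^{\rm ML}_n - \mfc_n)H^*$ one gets $\hrNorm{(R - \bar R)S^{-1}} \le \rhNorm{H}\,\rrNorm{S^{-1}}\,\hhNorm{C^{\rm ML}_n - \mfc_n} \le (\rhNorm{H}/\gamma_{\rm min})\,\hhNorm{C^{\rm ML}_n - \mfc_n}$, while for the second term $\hrNorm{\mfk_n(\bar S - S)S^{-1}} \le \hrNorm{\mfk_n}\,\rrNorm{\bar S - S}\,\rrNorm{S^{-1}}$, and the triangle inequality followed by Lemma~\ref{lem:mlcae} yields $\rrNorm{\bar S - S} \le \rhNorm{H}^2\,\hhNorm{C^{\rm ML}_n - \mfc_n} + \rrNorm{H(\tilde{R}^{\rm ML}_n - R^{\rm ML}_n)} \le (\rhNorm{H}^2 + c)\,\hhNorm{C^{\rm ML}_n - \mfc_n}$.

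Collecting the two estimates produces \eqref{eq:gaincov} with $\tilde c_n = \gamma_{\rm min}^{-1}\big(\rhNorm{H} + \hrNorm{\mfk_n}(\rhNorm{H}^2 + c)\big)$, which is finite by Propositions~\ref{prop:fincov} and~\ref{prop:reg}; using $\mfk_n = \mfc_n H^*\bar S^{-1}$ it may equivalently be written in terms of $\rhNorm{H}$, $1/\gamma_{\rm min}$ and $\hhNorm{\mfk_n H}$ if that form is preferred. I do not anticipate any genuine obstacle here: the argument is the Hilbert-space transcription of the finite-dimensional gain-perturbation estimate of \cite{ourmlenkf}, the single new point being that Lemma~\ref{lem:mlcae} controls the truncation term $H(R^{\rm ML}_n - \tilde{R}^{\rm ML}_n)$, and the one subtlety worth flagging is that no a priori smallness of $\hhNorm{C^{\rm ML}_n - \mfc_n}$ is needed — the bound is unconditional precisely because $\rrNorm{S^{-1}}$ is controlled by the coercivity of $\Gamma$ alone.
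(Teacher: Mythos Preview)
Your proof is correct and essentially identical to the paper's: both derive the same resolvent-type identity $K^{\rm ML}_n - \mfk_n = (R-\bar R)S^{-1} + \mfk_n(\bar S - S)S^{-1}$ (the paper cites it from \cite{ourmlenkf}), bound $\rrNorm{S^{-1}}\le 1/\gamma_{\rm min}$ via the positivity of $H\tilde R^{\rm ML}_n$ and $\Gamma$, and invoke Lemma~\ref{lem:mlcae} to control the truncation term. The only cosmetic difference is that the paper groups the second term as $\mfk_n H(\tilde R^{\rm ML}_n - \bar R_n)S^{-1}$ and reports the constant as $(1+2\hhNorm{\mfk_n H})\rhNorm{H}/\gamma_{\rm min}$, whereas you split $\bar S - S$ explicitly and track the constant through $\hrNorm{\mfk_n}$; your closing remark that the two forms are interchangeable is exactly right.
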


\begin{proof}
It is shown in Lemma 3.4 of \cite{ourmlenkf} that 
\begin{eqnarray}
\mfk_n - \kml{n} &=&   \mfk_n H \big ( \tilde{R}^{\rm ML}_{n} -R_n 
 \big)
\big( H \tilde{R}^{\rm ML}_{n} + \Gamma \big )^{-1} \\
&+& \big( (\mfc_{n} - {C}^{\rm ML}_{n}) H^*\big) 
\big( H \tilde{R}^{\rm ML}_{n} 
+ \Gamma\big )^{-1}. 
\label{eq:kay}
\end{eqnarray}
Note that $x^\transpose (\Gamma+B) x \geq x^\transpose \Gamma x \geq \gamma_{\rm min}$ 
for all $x \in \bbR^m$ whenever $B=B^\transpose \geq 0$, and this implies that 
$\rrNorm{(H \tilde{R}^{\rm ML}_{n} H^* + \Gamma)^{-1}} \leq 1/{\gamma_{\rm min}}$
where $\gamma_{\rm min} >0$ is the smallest eigenvalue of $\Gamma$.
It follows by \eqref{eq:mlcov_bound} that 
\begin{equation}\label{eq:kayer1}
\hrNorm{\mfk_n - \kml{n}} \leq \frac{1+ 2\hhNorm{\mfk_n H}}{\gamma_{\rm min}} 
\rhNorm{H}\hhNorm{ \mfc_{n} - {C}^{\rm ML}_{n}}.
\end{equation}
\end{proof}

\begin{theorem}
\label{thm:covspliteps}
Suppose Assumptions~\ref{ass:psilip} and~\ref{ass:mlrates} hold and
for any $\varepsilon>0$, let $L$ and $\{M_\ell\}_{\ell=0}^L$ be
defined as in Theorem~\ref{thm:main}. Then the following inequality
holds {for any $p\ge 2$ and $n \in \bbN$},
\begin{equation}
\|C^{\rm ML}_n - \mfc_n\|_{L^p(\Omega; \cH \otimes \cH)} \lesssim \varepsilon 
+ \|C^{\rm ML}_n - \mfc^{\rm ML}_n\|_{L^p(\Omega; \cH \otimes \cH)}.
\label{eq:covspliteps}
\end{equation}
\end{theorem}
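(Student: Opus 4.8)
The plan is to insert the reference estimator and apply the triangle inequality,
\begin{equation*}
\|C^{\rm ML}_n - \mfc_n\|_{L^p(\Omega;\cH\otimes\cH)} \le \|C^{\rm ML}_n - \mfc^{\rm ML}_n\|_{L^p(\Omega;\cH\otimes\cH)} + \|\mfc^{\rm ML}_n - \mfc_n\|_{L^p(\Omega;\cH\otimes\cH)},
\end{equation*}
so that everything reduces to showing $\|\mfc^{\rm ML}_n - \mfc_n\|_{L^p(\Omega;\cH\otimes\cH)} \lesssim \varepsilon$, where $\mfc^{\rm ML}_n$ is the multilevel sample covariance of the mean-field limiting multilevel ensemble, i.e.\ the analogue of $\cml_n$ in~\eqref{eq:mlSampleMoments} built from the reference particles $\mfv^\ell_n$. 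The feature that makes this reduction clean is that the reference gain $\mfk_j$ is \emph{deterministic} (defined through the limiting formulae~\eqref{eq:mfpredict}--\eqref{eq:mfupdate}), so the reference particles are i.i.d.\ within each level and mutually independent across levels, the pairwise coupling being their only dependence; hence $\mfc^{\rm ML}_n$ is a bona fide multilevel estimator of $\mfc_n = \mathrm{Cov}[\mfv_n]$, to which the standard MLMC analysis applies, whereas the remaining pathwise term $\|C^{\rm ML}_n - \mfc^{\rm ML}_n\|$ encodes only the discrepancy between the empirical and mean-field gains and is controlled separately via Lemma~\ref{lem:gce}.

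For the reduced estimate I would split into bias and statistical error, $\|\mfc^{\rm ML}_n - \mfc_n\|_{L^p} \le \|\mfc^{\rm ML}_n - \E{\mfc^{\rm ML}_n}\|_{L^p(\Omega;\cH\otimes\cH)} + \|\E{\mfc^{\rm ML}_n} - \mfc_n\|_{\cH\otimes\cH}$. Since the sample covariance~\eqref{eq:sampleCov} is unbiased for $\mathrm{Cov}$, the telescoping sum collapses to $\E{\mfc^{\rm ML}_n} = \mathrm{Cov}[\mfv^L_n]$ (with the convention $\mfv^{-1}\equiv 0$), and using the identity $X\otimes X - Y\otimes Y = (X-Y)\otimes X + Y\otimes(X-Y)$ together with~\eqref{eq:hsdef}, Jensen's inequality and H\"older's inequality one obtains
\begin{equation*}
\|\mathrm{Cov}[\mfv^L_n] - \mfc_n\|_{\cH\otimes\cH} \lesssim \|\mfv^L_n - \mfv_n\|_{L^2(\Omega;\cH)}\,\big(\|\mfv^L_n\|_{L^2(\Omega;\cH)} + \|\mfv_n\|_{L^2(\Omega;\cH)}\big).
\end{equation*}
The second factor is bounded uniformly in $L$ by the $L^p(\Omega;\cH)$-boundedness of the reference MFEnKF particles (established by the same arguments as Proposition~\ref{prop:reg}), while the first is the level-$L$ bias $\|\mfv^L_n - \mfv_n\|_{L^p(\Omega;\cH)}\lesssim h_L^{\beta/2}$, obtained by propagating Assumption~\ref{ass:mlrates}(i)--(ii) through the $n$ mean-field predict/update steps with the Lipschitz bound Assumption~\ref{ass:psilip}(i); the choice $L = \lceil 2\log_\kappa(\varepsilon^{-1})/\beta\rceil$ of Theorem~\ref{thm:main} makes $h_L^{\beta/2}\eqsim\varepsilon$, so the bias is $\lesssim\varepsilon$.

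For the statistical error, the $L+1$ level contributions of $\mfc^{\rm ML}_n$ are independent and centered, so a Hilbert-space Marcinkiewicz--Zygmund/Rosenthal-type inequality (the locally-Lipschitz, linear-growth variant of the standard MLMC estimate discussed before Definition~\ref{def:globalLip}, applied to the observable $v\mapsto v\otimes v$ rather than a globally Lipschitz $\varphi$, which is admissible because all moments of $\mfv^\ell_n$ are finite) yields
\begin{equation*}
\|\mfc^{\rm ML}_n - \E{\mfc^{\rm ML}_n}\|_{L^p(\Omega;\cH\otimes\cH)}^2 \lesssim \sum_{\ell=0}^L \frac{1}{M_\ell}\,\|\mfv^\ell_n - \mfv^{\ell-1}_n\|_{L^{2p}(\Omega;\cH)}^2\,\big(1 + \|\mfv^\ell_n\|_{L^{2p}(\Omega;\cH)}^2\big) \lesssim \sum_{\ell=0}^L \frac{h_\ell^{\beta}}{M_\ell},
\end{equation*}
where the increment bound $\|\mfv^\ell_n - \mfv^{\ell-1}_n\|_{L^p(\Omega;\cH)}\lesssim h_\ell^{\beta/2}$ is again obtained by propagating Proposition~\ref{prop:asimp}(i) through the mean-field recursion. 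Inserting the choice of $\{M_\ell\}$ from~\eqref{eq:chooseMlr} is precisely the standard MLMC work-balance computation and gives $\sum_{\ell=0}^L h_\ell^\beta/M_\ell\lesssim\varepsilon^2$ in each of the three regimes $\beta>d\gamma$, $\beta=d\gamma$, $\beta<d\gamma$. Combining with the bias bound yields $\|\mfc^{\rm ML}_n - \mfc_n\|_{L^p(\Omega;\cH\otimes\cH)}\lesssim\varepsilon$, which together with the triangle inequality above gives the claim.

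The main obstacle is not the (routine) MLMC bookkeeping but the two propagation estimates $\|\mfv^L_n - \mfv_n\|_{L^p(\Omega;\cH)}\lesssim h_L^{\beta/2}$ and $\|\mfv^\ell_n - \mfv^{\ell-1}_n\|_{L^p(\Omega;\cH)}\lesssim h_\ell^{\beta/2}$ used above: one must push the level-$\ell$ truncation-and-discretization error through $n$ nonlinear predict steps and $n$ affine update steps of the mean-field ensemble, each update mixing in the projector $\cP_\ell$ and the deterministic gain $\mfk_j$, while checking that the accumulated constant grows only like $|\log(\varepsilon)|^{n}$, consistently with Theorem~\ref{thm:main}. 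This in turn rests on the uniform $L^p(\Omega;\cH)$-stability for every $p\ge2$ of all the processes involved, i.e.\ on Assumptions~\ref{ass:psilip} and~\ref{ass:mlrates} together with the finiteness of all moments of the initial datum.
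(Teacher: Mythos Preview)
Your proposal is correct and follows essentially the same route as the paper, which also inserts $\mfc^{\rm ML}_n$ and $\mfc^L_n = \E{\mfc^{\rm ML}_n}$ via Minkowski's inequality and then bounds the bias $\|\mfc^L_n-\mfc_n\|$ and the i.i.d.\ statistical error $\|\mfc^{\rm ML}_n-\mfc^L_n\|_p$ in two dedicated lemmas (the paper uses the simpler Minkowski sum $\sum_\ell M_\ell^{-1/2}h_\ell^{\beta/2}$, via a Marcinkiewicz--Zygmund bound for the sample covariance, rather than your Rosenthal-type sum of squares across levels, but the conclusion is identical). One minor point: for \emph{this} theorem the implicit constants depend on $n$ only through the finite recursion and carry no $|\log\varepsilon|^n$ factor; that factor enters later in the paper, in the ensemble-distance induction, through the sum over $L\eqsim|\log\varepsilon|$ levels.
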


\begin{proof}
  Let $\mfc^L_n$ denote the predicting covariance of the final $L^{th}$
  level limiting system at time $n$, in the sense that the forward map
  above is replaced by $\Psi_L$, but the gain comes from the continuum
  mean-field limiting system.  Furthermore, let $\mfc^{\rm ML}_n$
  denote the covariance associated to the multilevel ensemble
  $\{(\mfv^{\ell}_{n,i})_{i=1}^{M_\ell}\}_{\ell=1}^L$. Minkowski's inequality is used to split
\begin{equation}
\|C^{\rm ML}_n - \mfc_n\|_{p} \leq \|\mfc^L_n - \mfc_n\|_{p} + 
\|\mfc^{\rm ML}_n - \mfc^L_n\|_{p} + \|C^{\rm ML}_n - \mfc^{\rm ML}_n \|_{p},
\label{eq:covsplit}
\end{equation}
and each term will be dealt with in turn, in the following three lemmas.  
The proof of the 
theorem is concluded after Lemmas~\ref{lem:disccov} and~\ref{lem:iidcover} 
which bound the first two terms, respectively.  
\end{proof}

\begin{lemma}
\label{lem:disccov}
Suppose Assumptions~\ref{ass:psilip} and~\ref{ass:mlrates} hold
 and for any 
$\varepsilon>0$, let $L$ be defined as in Theorem~\ref{thm:main}.
Then the following inequalities hold for any $n\in \bbN$ and $p\ge 2$,
\begin{equation}\label{eq:disccov2}
\|\mfc^L_n- \mfc_n\|_{\cH \otimes \cH}
\lesssim \varepsilon,
\end{equation}
\begin{equation}\label{eq:disccov1}
\max\left(  \| \mfv_{n}^L -  \mfv_{n}\|_{L^p(\Omega; \cH)}, \| \hmfv_{n}^L -  \hmfv_{n}\|_{L^p(\Omega; \cH)}  \right) 
\lesssim \varepsilon,
\end{equation}
and 
\begin{equation}\label{eq:disccov3}
\max\left(  \| \mfv_{n}^\ell -  \mfv_{n}^{\ell-1}\|_{L^p(\Omega; \cH)}, \| \hmfv_{n}^\ell -  \hmfv_{n}^{\ell-1}\|_{L^p(\Omega; \cH)}  \right) 
\lesssim h_\ell^{\beta/2}, \quad \forall \ell \in \bbN. 
\end{equation}

\end{lemma}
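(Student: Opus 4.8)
The plan is to prove, by induction on $n$, the ``master estimate''
\[
\max\Big( \| \mfv_n^\ell - \mfv_n \|_{p},\ \| \hmfv_n^\ell - \hmfv_n \|_{p} \Big) \lesssim h_\ell^{\beta/2},
\qquad \text{for all } \ell\ge 0 \text{ and } p\ge 2,
\]
with an implicit constant depending on $n$, $p$, $c_\Psi$, $\rhNorm{H}$, $\gamma_{\rm min}$ and $\hhNorm{\mfk_k H}$ for $k\le n$, but not on $\ell$ or $\varepsilon$; recall that the level processes $\mfv^\ell_n,\hmfv^\ell_n$ are bounded in $L^p(\Omega;\cH)$, as noted after Proposition~\ref{prop:reg}. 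Granting this, \eqref{eq:disccov1} is the case $\ell=L$ together with $h_L^{\beta/2}\lesssim\varepsilon$, which holds by the choice of $L$ in Theorem~\ref{thm:main}; \eqref{eq:disccov3} follows from Minkowski's inequality, $\| \mfv_n^\ell - \mfv_n^{\ell-1}\|_{p} \le \| \mfv_n^\ell - \mfv_n\|_{p} + \| \mfv_n - \mfv_n^{\ell-1}\|_{p} \lesssim h_\ell^{\beta/2} + h_{\ell-1}^{\beta/2} \lesssim h_\ell^{\beta/2}$, using that $h_{\ell-1}/h_\ell$ is bounded uniformly in $\ell$ (and likewise for $\hmfv$); and \eqref{eq:disccov2} follows by writing $\mfc_n^L-\mfc_n = \mathrm{Cov}[\mfv_n^L-\mfv_n,\mfv_n^L]+\mathrm{Cov}[\mfv_n,\mfv_n^L-\mfv_n]$ and estimating each term in $\cH\otimes\cH$ by \eqref{eq:hsdef}, Jensen's and H\"older's inequalities and $L^2(\Omega;\cH)$-boundedness (the computation already used for Proposition~\ref{prop:asimp}(iii)), which gives $\| \mfc_n^L - \mfc_n\|_{\cH\otimes\cH} \lesssim \| \mfv_n^L - \mfv_n\|_2 \lesssim h_L^{\beta/2} \lesssim \varepsilon$.

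For the base case $n=0$, the level-$\ell$ limiting ensemble is initialized at $\cP_\ell u_0$ against $u_0$, so the master estimate reduces to Assumption~\ref{ass:mlrates}(ii). For the inductive step, assume the estimate holds for the updated ensemble at time $n$. In the prediction step $\mfv_{n+1}^\ell=\Psi^\ell(\hmfv_n^\ell)$, $\mfv_{n+1}=\Psi(\hmfv_n)$, so by Minkowski's inequality, Assumption~\ref{ass:psilip}(i), and Assumption~\ref{ass:mlrates}(i),
\[
\| \mfv_{n+1}^\ell - \mfv_{n+1}\|_{p} \le \| \Psi^\ell(\hmfv_n^\ell)-\Psi^\ell(\hmfv_n)\|_{p} + \| \Psi^\ell(\hmfv_n)-\Psi(\hmfv_n)\|_{p} \lesssim \| \hmfv_n^\ell-\hmfv_n\|_{p} + h_\ell^{\beta/2} \lesssim h_\ell^{\beta/2}.
\]
I then bound $\| (I-\cP_\ell)\mfc_{n+1}\|_{\cH\otimes\cH}$ as follows: because the numerical solver outputs into $\cH_\ell$ and the update \eqref{eq:upsamps} applies $\cP_\ell$ to the gain, the level-$\ell$ process remains in $\cH_\ell$, so $I-\cP_\ell$ annihilates $\mfv_{n+1}^\ell-\E{\mfv_{n+1}^\ell}$, whence $\| (I-\cP_\ell)(\mfv_{n+1}-\E{\mfv_{n+1}})\|_2 \le 2\| \mfv_{n+1}-\mfv_{n+1}^\ell\|_2 \lesssim h_\ell^{\beta/2}$; combining with \eqref{eq:hsdef} and the $L^2(\Omega;\cH)$-boundedness of $\mfv_{n+1}$ gives $\| (I-\cP_\ell)\mfc_{n+1}\|_{\cH\otimes\cH}\lesssim h_\ell^{\beta/2}$, and since $(I-\cP_\ell)\mfk_{n+1}=\big((I-\cP_\ell)\mfc_{n+1}\big)H^*\bar S_{n+1}^{-1}$ with $\hrNorm{H^*\bar S_{n+1}^{-1}}$ finite, also $\hrNorm{(I-\cP_\ell)\mfk_{n+1}}\lesssim h_\ell^{\beta/2}$.

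In the update step, under the natural coupling in which both systems use the same perturbed observation $\tilde{y}_{n+1}$, subtracting $\hmfv_{n+1}^\ell = (I-\cP_\ell\mfk_{n+1}H)\mfv_{n+1}^\ell + \cP_\ell\mfk_{n+1}\tilde{y}_{n+1}$ from $\hmfv_{n+1}=(I-\mfk_{n+1}H)\mfv_{n+1}+\mfk_{n+1}\tilde{y}_{n+1}$ and regrouping gives
\[
\hmfv_{n+1}^\ell - \hmfv_{n+1} = (I-\cP_\ell\mfk_{n+1}H)(\mfv_{n+1}^\ell-\mfv_{n+1}) + (I-\cP_\ell)\mfk_{n+1}(H\mfv_{n+1}-\tilde{y}_{n+1}).
\]
The first term is bounded by $(1+\hhNorm{\mfk_{n+1}H})\| \mfv_{n+1}^\ell-\mfv_{n+1}\|_p\lesssim h_\ell^{\beta/2}$, and the second by $\hrNorm{(I-\cP_\ell)\mfk_{n+1}}\,\| H\mfv_{n+1}-\tilde{y}_{n+1}\|_p\lesssim h_\ell^{\beta/2}$, using the bound just established, the $L^p(\Omega;\cH)$-boundedness of $\mfv_{n+1}$ (Proposition~\ref{prop:reg}), and the finiteness of all moments of $\eta_{n+1}$; this closes the induction. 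The one genuinely new difficulty, absent from the finite-dimensional analysis of~\cite{ourmlenkf}, is the term $(I-\cP_\ell)\mfk_{n+1}(H\mfv_{n+1}-\tilde{y}_{n+1})$: a priori $(I-\cP_\ell)\mfk_{n+1}$ only vanishes as $\ell\to\infty$ without a rate, since $\mfc_{n+1}$ need not be smooth in the scale generated by $\{\phi_k\}$, and it is the observation that the level-$\ell$ process lies in $\cH_\ell$ that converts $(I-\cP_\ell)\mfc_{n+1}$ into the already-controlled prediction error (if one specializes to $\Psi^\ell=\cP_\ell\Psi$, Proposition~\ref{prop:asimp}(iii) may be invoked instead).
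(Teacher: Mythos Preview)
Your proof is correct and follows the same inductive strategy as the paper, with one organizational and one technical difference worth noting.

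Organizationally, you prove a single master estimate $\|\mfv_n^\ell-\mfv_n\|_p,\ \|\hmfv_n^\ell-\hmfv_n\|_p\lesssim h_\ell^{\beta/2}$ valid for all $\ell$, and then read off \eqref{eq:disccov1} (by taking $\ell=L$), \eqref{eq:disccov3} (by the triangle inequality through $\mfv_n$), and \eqref{eq:disccov2} (by the covariance splitting). The paper instead runs two parallel inductions: one comparing $\mfv^L$ to $\mfv$ for \eqref{eq:disccov1}--\eqref{eq:disccov2}, and a separate one comparing $\mfv^\ell$ to $\mfv^{\ell-1}$ directly for \eqref{eq:disccov3}. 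Your unified route is more economical.

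Technically, for the key projection bound $\|(I-\cP_\ell)\mfc_{n+1}\|_{\cH\otimes\cH}\lesssim h_\ell^{\beta/2}$ needed in the update step, the paper invokes Proposition~\ref{prop:asimp}(iii), which carries the structural hypothesis $\Psi^\ell=\cP_\ell\Psi$. Your argument instead uses that the level-$\ell$ limiting process takes values in $\cH_\ell$, so that $(I-\cP_\ell)(\mfv_{n+1}-\E{\mfv_{n+1}})$ can be replaced by $(I-\cP_\ell)$ acting on the already-controlled difference $\mfv_{n+1}-\mfv_{n+1}^\ell$; this is a weaker requirement than $\Psi^\ell=\cP_\ell\Psi$ (it only needs $\Psi^\ell:\cH\to\cH_\ell$), and you correctly flag the alternative via Proposition~\ref{prop:asimp}(iii). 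Neither hypothesis is part of Assumptions~\ref{ass:psilip}--\ref{ass:mlrates} as stated, so both your proof and the paper's rely on an implicit structural assumption here; yours is the milder one.
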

\begin{proof}
  The initial data for the respective mean-field methods is given by
  $\hmfv_0$ and $\hmfv^L_0 \eqcolon \cP_L  \hmfv_0$. 
Assumption~\ref{ass:mlrates}(ii) implies that 
\[
\|\hmfv_0 -  \hmfv^L_0 \|_{p} \lesssim h_L^{\beta/2} \lesssim \varepsilon.
\]
By \hh{Assumptions~\ref{ass:psilip}(i) and~\ref{ass:mlrates}(i),}
\[
\|\mfv_n -  \mfv^L_n \|_{p} \lesssim  \|\hmfv_{n-1} -  \hmfv^L_{n-1} \|_{p} + h_L^{\beta/2},
\]
and by Proposition \ref{prop:asimp}(iii),
\[
\begin{split}
 \|\hmfv_n -  \hmfv^L_n \|_{p}  
&\leq  \hhNorm{I-\mfk_{n}H} \|\mfv_{n}^L -  \mfv_{n}\|_p 
+ \|(I-\cP_L)\mfk_{n}(H\mfv_{n}^L + y_n) \|_p \\
& \leq c \left( \|\mfv_{n}^L -  \mfv_{n}\|_p + \|(I-\cP_L)\bar C_{n}\|_{\cH \times \cH} \right) \\
& \lesssim \|\mfv_{n}^L -  \mfv_{n}\|_p + \varepsilon,
\end{split}
\]
where $\bar S_n \coloneq (H\bar{C}_nH^* + \Gamma)$.
Inequality~\eqref{eq:disccov1} consequently holds by induction.  Furthermore, 
\[
\begin{split}
 &\|\mfc^L_n- \mfc_n\|_{\cH \otimes \cH}\\
 &= \hhNorm{\E{ (\mfv^L_n- \E{\mfv^L_n}) \otimes (\mfv^L_n- \E{\mfv^L_n}) 
 - (\mfv_n- \E{\mfv_n}) \otimes (\mfv_n- \E{\mfv_n})} }\\
&\leq  \norm{ (\mfv^L_n- \E{\mfv^L_n}) \otimes (\mfv^L_n- \E{\mfv^L_n}) 
 - (\mfv_n- \E{\mfv_n}) \otimes (\mfv_n- \E{\mfv_n}) }_{1} \\
& \leq   (\| \mfv^L_n- \E{\mfv^L_n} \|_2 +  \| (\mfv_n- \E{\mfv_n}) \|_2 ) \| \mfv^L_n - \mfv_n\|_2\\
& \lesssim \varepsilon. 
\end{split}
\]
An analogous argument may be used to bound the second term of inequality~\eqref{eq:disccov2}.

To prove inequality~\eqref{eq:disccov3}, note first that due to the matching initial data,
the inequality holds trivially for the update at $n=0$.
By Assumption~\ref{ass:psilip}(i), Proposition~\ref{prop:asimp}(i),
and Minkowski's inequality,
\[
\begin{split}
\norm{\mfv_{n}^\ell -  \mfv_{n}^{\ell-1}}_p
& \leq \norm{ \Psi^\ell( \hmfv_{n-1}^\ell) - \Psi^{\ell-1}( \hmfv_{n-1}^\ell)}_p
+ \norm{ \Psi^{\ell-1}(\hmfv_{n-1}^\ell) - \Psi^{\ell-1}(
  \hmfv_{n-1}^{\ell-1})}_p \\
& \lesssim \norm{ \hmfv_{n-1}^\ell -  \hmfv_{n-1}^{\ell-1}}_p + h_\ell^{\beta/2},
\end{split}
\]
and by \hh{Proposition \ref{prop:asimp}(iii),} 
\[
\begin{split}
\norm{\hmfv_{n}^\ell - \hmfv_{n}^{\ell-1}}_p &\leq 
\hhNorm{I-\cP_\ell\mfk_{n}H} \norm{\mfv_n^\ell -  \mfv_n^{\ell-1} }_p + 
\|(\cP_\ell-\cP_{\ell-1})\mfk_{n}H\mfv_{n}^{\ell-1} \|_p\\
&\lesssim \norm{\mfv_n^\ell -  \mfv_n^{\ell-1} }_p +  
(\|(I-\cP_\ell)\bar C_{n}\|_{\cH\otimes \cH}+
\|(I-\cP_{\ell-1})\bar C_{n}\|_{\cH\otimes \cH}) 
\|\mfv_{n}^{\ell-1} \|_{p}\\
&\lesssim \norm{\mfv_n^\ell -  \mfv_n^{\ell-1} }_p +  h_\ell^{-\beta/2}.
\end{split}
\]
Inequality~\eqref{eq:disccov3} holds by induction.

\end{proof}

Next we derive a bound for $\|\mfc^{\rm ML}_n - \mfc^L_n\|_p$, where we will 
make use of the following representation of the finte resolution mean-field covariance
\[
\mfc^L_n =\sum_{\ell=0}^L \cov[\mfv^\ell_n] - \cov[\mfv^{\ell-1}_n].
\]
We also recall that $\mfc^{\rm ML}$ denotes the mean-field MLEnKF sample covariance
defined by 
\begin{equation}\label{eq:covML-def}
\mfc^{\rm ML}_n = \sum_{\ell=0}^L \cov_{M_\ell}[\mfv^\ell_n] - \cov_{M_\ell}[\mfv^{\ell-1}_n].
\end{equation}

\begin{lemma}[Multilevel \iid sample covariance error]  
\label{lem:iidcover}
Suppose Assumptions~\ref{ass:psilip} and~\ref{ass:mlrates} 
hold and for any $\varepsilon>0$,  
let $L $ and $\{M_\ell\}_{\ell=0}^L$ be defined as in Theorem~\ref{thm:main}.
Then the following inequality holds for any $n \in \bbN$ and $p \ge 2$, 
\begin{equation}
\|\mfc^{\rm ML}_n - \mfc^L_n\|_{L^p(\Omega; \cH \otimes \cH)}   \lesssim \varepsilon.
\label{eq:iidcover}
\end{equation}
\end{lemma}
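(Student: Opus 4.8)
The plan is to exploit that, in contrast to the true MLEnKF ensemble, the mean-field multilevel ensemble $\{(\mfv^{\ell}_{n,i})_{i=1}^{M_\ell}\}_{\ell=0}^L$ decouples: the gain $\mfk_n$ is deterministic (it comes from the limiting system~\eqref{eq:mfpredict}--\eqref{eq:mfupdate}), the coupled pairs $(\mfv^{\ell}_{n,i},\mfv^{\ell-1}_{n,i})$ are i.i.d.\ in $i$ for each fixed $\ell$, and the blocks belonging to distinct levels are mutually independent (their driving noises $\omega_{i,\ell}$ and perturbed observations differ across $\ell$). Using the representation $\mfc^L_n=\sum_{\ell=0}^L(\cov[\mfv^\ell_n]-\cov[\mfv^{\ell-1}_n])$ recalled before the lemma together with~\eqref{eq:covML-def}, one writes
\[
\mfc^{\rm ML}_n - \mfc^L_n = \sum_{\ell=0}^L D_\ell, \qquad
D_\ell := \left(\cov_{M_\ell}[\mfv^\ell_n] - \cov[\mfv^\ell_n]\right) - \left(\cov_{M_\ell}[\mfv^{\ell-1}_n] - \cov[\mfv^{\ell-1}_n]\right).
\]
Since $\cov_{M_\ell}$ is unbiased, each $D_\ell$ has mean zero, and the $D_\ell$ are independent $\cH\otimes\cH$-valued random variables; this is exactly the setting of the standard i.i.d.\ multilevel sample-moment estimate recalled earlier, and it remains to supply the per-level bound and carry out the geometric bookkeeping.

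First I would reduce the per-level term to increments. Writing $\Delta_\ell := \mfv^\ell_n - \mfv^{\ell-1}_n$ and using bilinearity of both $\cov$ and $\cov_{M_\ell}$,
\[
D_\ell = \left(\cov_{M_\ell} - \cov\right)[\mfv^{\ell-1}_n,\Delta_\ell] + \left(\cov_{M_\ell} - \cov\right)[\Delta_\ell,\mfv^{\ell-1}_n] + \left(\cov_{M_\ell} - \cov\right)[\Delta_\ell],
\]
so that every summand carries at least one factor of the small increment $\Delta_\ell$. The elementary Monte Carlo estimate for i.i.d.\ sample (cross-)covariances in a separable Hilbert space, $\|(\cov_{M_\ell} - \cov)[A,B]\|_{L^p(\Omega;\cH\otimes\cH)} \lesssim M_\ell^{-1/2}\|A\|_{L^{2p}(\Omega;\cH)}\|B\|_{L^{2p}(\Omega;\cH)}$, combined with the uniform bound $\|\mfv^{\ell-1}_n\|_{L^q(\Omega;\cH)}\lesssim 1$ for all $q\ge2$ (the $L^p$ regularity of the multilevel mean-field processes, proved exactly as Proposition~\ref{prop:reg}) and the increment bound $\|\Delta_\ell\|_{L^q(\Omega;\cH)}\lesssim h_\ell^{\beta/2}$ for all $q\ge2$ from Lemma~\ref{lem:disccov}, inequality~\eqref{eq:disccov3}, then yields $\|D_\ell\|_{L^p(\Omega;\cH\otimes\cH)} \lesssim M_\ell^{-1/2} h_\ell^{\beta/2}$ (the purely-$\Delta_\ell$ term is of higher order since $h_\ell\lesssim1$).

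To finish I would recombine. Because the $D_\ell$ are independent, mean zero, and valued in the Hilbert space $\cH\otimes\cH\cong HS(\cH,\cH)$, the Marcinkiewicz--Zygmund/Burkholder inequality for $p\ge2$ yields
\[
\|\mfc^{\rm ML}_n - \mfc^L_n\|_{L^p(\Omega;\cH\otimes\cH)} \lesssim \left(\sum_{\ell=0}^L M_\ell^{-1} h_\ell^{\beta}\right)^{1/2}
\]
(in fact Minkowski's inequality alone already suffices and gives the same order). Inserting the choice of $M_\ell$ from~\eqref{eq:chooseMlr}, using the geometric decay of $h_\ell$ in $\ell$ and the relation $h_L^{\beta/2}\eqsim\varepsilon$ coming from the choice of $L$ in Theorem~\ref{thm:main}, the summand $M_\ell^{-1}h_\ell^\beta$ equals $h_\ell^{(\beta-d\gamma)/2}$ times a factor independent of $\ell$: when $\beta>d\gamma$ the series is geometric and dominated by its $\ell=0$ term, giving $\lesssim h_L^{\beta}\eqsim\varepsilon^2$; when $\beta<d\gamma$ it is dominated by its $\ell=L$ term, again giving $\lesssim h_L^{\beta}\eqsim\varepsilon^2$; and in the borderline case $\beta=d\gamma$ the extra $L^2$ in $M_\ell$ exactly compensates the $O(L)$ number of equal terms. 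Taking square roots gives the claimed $\lesssim\varepsilon$ in all three cases.

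The step I expect to be the real work is the first displayed black box: making the i.i.d.\ sample-covariance Monte Carlo estimate rigorous and uniform in the infinite-dimensional setting, i.e.\ verifying that the $M^{-1/2}$ rate holds in the Hilbert--Schmidt norm $\hhNorm{\cdot}$ with a constant depending only on low-order $L^q(\Omega;\cH)$ moments of the cross-coupled samples. These moments are finite to all orders here precisely because the mean-field multilevel processes lie in $L^p(\Omega;\cH)$ for every $p$; once that estimate — the i.i.d.\ variant of the standard MLMC lemma — is in hand, the bilinearity reduction and the geometric case analysis are routine.
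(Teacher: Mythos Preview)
Your proposal is correct and follows essentially the same route as the paper: write $\mfc^{\rm ML}_n-\mfc^L_n$ as a level sum of centred terms, use bilinearity to extract a factor of $\Delta_\ell\mfv_n$, apply an i.i.d.\ sample cross-covariance Monte Carlo bound of order $M_\ell^{-1/2}$, invoke the increment estimate~\eqref{eq:disccov3}, and sum. The paper uses the two-term split $\cov[\mfv^\ell_n,\Delta_\ell\mfv_n]+\cov[\Delta_\ell\mfv_n,\mfv^{\ell-1}_n]$ and plain Minkowski across levels (which, as you note, already suffices), and the ``black box'' you flag as the real work is supplied precisely by Lemma~\ref{lem:covM-Lr-error} in the appendix.
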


\begin{proof}
Recall that \eqref{eq:covML-def} is unbiased, 
$\E{\mfc^{\rm ML}_n } = \mfc^L_n$, so
\begin{equation}
\begin{split}
 \|\mfc^{\rm ML}_n  &- \mfc^L_n \|_{p} =
 \|\mfc^{\rm ML}_n  - \E{\mfc^{\rm ML}_n}\|_{p}.
\end{split}
\end{equation}
For a random field $Y: \Omega \to \cH$ we introduce the shorthand $\underline{Y} := Y - \E{Y}$. 
By equation~\eqref{eq:covML-def},
\begin{equation}
 \begin{split}
 \|\mfc^{\rm ML}_n  - \E{\mfc^{\rm ML}_n}\|_{p}
&= \bigg\|\sum_{\ell = 0}^L \underline{\bigg(\cov_{M_\ell}[\mfv^\ell_n] - \cov_{M_\ell}[\mfv^{\ell-1}_n] \bigg)}\bigg\|_{p} \\
&\leq \sum_{\ell = 0}^L \big\|\underline{\cov_{M_\ell}[\mfv^\ell_n] - \cov_{M_\ell}[\mfv^{\ell-1}_n] } \big\|_{p}\\
&\leq \sum_{\ell = 0}^L \left( \big\|\underline{\cov_{M_\ell}[\mfv^\ell_n, \Delta_\ell \mfv_n]} \big\|_{p}
+ \big\|\underline{\cov_{ M_\ell}[\Delta_\ell \mfv_n, \mfv^{\ell-1}_n]} \big\|_{p}
\right),
 \end{split}
\end{equation}
where we recall that $\Delta_\ell \mfv_n = \mfv_n^\ell - \mfv_n^{\ell-1}$. 
We have
\begin{equation*}
\underline{\cov_{M_\ell}[\mfv^\ell_n, \Delta_\ell \mfv_n]}
= \cov_{M_\ell}[\mfv^\ell_n, \Delta_\ell \mfv_n] -  \cov[\mfv^\ell_n, \Delta_\ell \mfv_n],
\end{equation*}
and similarly for the other term. By Lemmas~\ref{lem:disccov} and~\ref{lem:covM-Lr-error},
\begin{equation*}
 \begin{split}
\|\mfc^{\rm ML}_n  - \E{\mfc^{\rm ML}_n}\|_{p}
&\leq 2\sum_{\ell = 0}^L \frac{c}{\sqrt{M_\ell}}
( \|\mfv_n^\ell\|_{2p} + \|\mfv_n^{\ell-1}\|_{2p}) 
\| \Delta_\ell \mfv_n 
\|_{2p} \\
&\lesssim 
\sum_{\ell = 0}^L \frac{1}{\sqrt{M_\ell}} \|\Delta_\ell \mfv_n \|_{2p}
 \lesssim  \sum_{\ell = 0}^L M_\ell^{-1/2}h_\ell^{\beta/2}
\lesssim \varepsilon.
 \end{split}
\end{equation*}
\end{proof}

The previous two lemmas complete the proof of
Theorem~\ref{thm:covspliteps}.  We now turn to bounding the latter
term of the right-hand side of inequality~\eqref{eq:covspliteps}, the
difference between multilevel ensemble covariances.

\begin{lemma}
\label{lem:mlcov}

Suppose Assumptions~\ref{ass:psilip} and~\ref{ass:mlrates} hold and
for any $\varepsilon>0$,  let $L$ and $\{M_\ell\}_{\ell=0}^L$ be defined as in Theorem~\ref{thm:main}.
Then, {for any $p\ge 2$ and $n \in \bbN$}, 
\begin{equation}
\begin{split}
 \|C^{\rm ML}_n - \bar{C}^{\rm ML}_n\|_{L^p(\Omega; \cH \otimes \cH)} \leq &
4 \sum_{l=0}^L \|v_{n}^{\ell} - \mfv_{n}^\ell\|_{L^{2p}(\Omega, \cH)} (\|v_{n}^{\ell}\|_{L^{2p}(\Omega, \cH)}  + \|\mfv_{n}^{\ell}\|_{L^{2p}(\Omega,\cH)}).
\label{eq:mlcov}
\end{split}
\end{equation}
\end{lemma}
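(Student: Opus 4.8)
The plan is to compare the two multilevel sample covariances increment by increment and sample by sample, using only the bilinearity of the empirical covariance operator $\cov_M[\cdot,\cdot]$ from \eqref{eq:sampleCov}. Recall from \eqref{eq:mlSampleMoments} that $C^{\rm ML}_n = \sum_{\ell=0}^L \big(\cov_{M_\ell}[v^\ell_n] - \cov_{M_\ell}[v^{\ell-1}_n]\big)$ and, from \eqref{eq:covML-def}, that $\bar C^{\rm ML}_n = \sum_{\ell=0}^L \big(\cov_{M_\ell}[\mfv^\ell_n] - \cov_{M_\ell}[\mfv^{\ell-1}_n]\big)$, where at each level both empirical covariances are evaluated over the \emph{same} $M_\ell$ noise realizations $\{\omega_{\ell,i}\}$ (with the convention $v^{-1}_n \equiv \mfv^{-1}_n \equiv 0$). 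Subtracting and applying Minkowski's inequality in $L^p(\Omega;\cH\otimes\cH)$ reduces the left-hand side of \eqref{eq:mlcov} to a sum over $\ell=0,\dots,L$ and over the two endpoints $j \in \{\ell,\ell-1\}$ of terms $\|\cov_{M_\ell}[v^j_n] - \cov_{M_\ell}[\mfv^j_n]\|_{L^p(\Omega;\cH\otimes\cH)}$.

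For each such term I would split the covariance difference via bilinearity: adding and subtracting $\cov_{M_\ell}[\mfv^j_n, v^j_n]$ gives $\cov_{M_\ell}[v^j_n] - \cov_{M_\ell}[\mfv^j_n] = \cov_{M_\ell}[v^j_n - \mfv^j_n,\, v^j_n] + \cov_{M_\ell}[\mfv^j_n,\, v^j_n - \mfv^j_n]$. It then suffices to prove the single-pair estimate $\|\cov_M[A,B]\|_{L^p(\Omega;\cH\otimes\cH)} \le 2\,\|A\|_{L^{2p}(\Omega;\cH)}\|B\|_{L^{2p}(\Omega;\cH)}$. This follows from the pairwise representation $\cov_M[A,B] = \tfrac{1}{2M(M-1)}\sum_{i\ne j}(A_i - A_j)\otimes(B_i - B_j)$: take $\cH\otimes\cH$-norms termwise, use the tensor isometry \eqref{eq:hsdef} so that $\|(A_i-A_j)\otimes(B_i-B_j)\|_{\cH\otimes\cH} = \|A_i-A_j\|_\cH\|B_i-B_j\|_\cH$, take the $L^p(\Omega)$-norm with Minkowski, apply the Cauchy--Schwarz inequality in $\omega$, and finally bound $\|A_i - A_j\|_{L^{2p}(\Omega;\cH)} \le 2\|A\|_{L^{2p}(\Omega;\cH)}$ (triangle inequality plus equidistribution of the level samples), which cancels the $2M(M-1)$ prefactor against the $M(M-1)$ nonzero summands and leaves the constant $2$. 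Applied to the two pieces above, this yields $\|\cov_{M_\ell}[v^j_n] - \cov_{M_\ell}[\mfv^j_n]\|_{L^p(\Omega;\cH\otimes\cH)} \le 2\,\|v^j_n - \mfv^j_n\|_{L^{2p}(\Omega;\cH)}\big(\|v^j_n\|_{L^{2p}(\Omega;\cH)} + \|\mfv^j_n\|_{L^{2p}(\Omega;\cH)}\big)$.

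Finally I would resum. Because of the telescoping structure, the index $j$ occurs as the fine endpoint ($j=\ell$) of exactly one increment and as the coarse endpoint ($j=\ell-1$) of at most one other increment, so after summing, each term $\|v^\ell_n - \mfv^\ell_n\|_{L^{2p}(\Omega;\cH)}\big(\|v^\ell_n\|_{L^{2p}(\Omega;\cH)} + \|\mfv^\ell_n\|_{L^{2p}(\Omega;\cH)}\big)$ is counted at most twice; combined with the constant $2$ from the per-pair estimate this gives exactly the factor $4$ claimed in \eqref{eq:mlcov}. There is no genuine obstacle in this lemma; the only points that need care are to decompose the covariance difference through bilinearity rather than by expanding $a\otimes a - b\otimes b$ inside each empirical covariance, and to keep the level bookkeeping honest so that the constant does not inflate beyond $4$.
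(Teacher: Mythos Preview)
Your proof is correct and reaches the same per-level bound (with the same constant $2$, hence the same final constant $4$) as the paper, but by a genuinely different route.  The paper expands the sample covariance according to its definition \eqref{eq:sampleCov}, i.e.\ it splits
\[
\cov_{M_\ell}[v_n^\ell] - \cov_{M_\ell}[\mfv_n^\ell]
\]
into a difference of second-moment sample averages $I_1$ and a difference of products of first-moment sample averages $I_2$, and bounds each separately via Jensen's and H\"older's inequalities.  You instead keep the sample covariance intact, exploit the bilinearity of $\cov_M[\cdot,\cdot]$ directly to produce the two cross-terms, and then bound $\|\cov_M[A,B]\|_{L^p}$ by the U-statistic pairwise representation
\[
\cov_M[A,B] = \frac{1}{2M(M-1)}\sum_{i\ne j}(A_i-A_j)\otimes(B_i-B_j).
\]
What your route buys is that the $M/(M-1)$ prefactor is absorbed automatically, so the constant $2$ is sharp without any slack; the paper's $I_1+I_2$ split silently drops that prefactor.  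On the other hand, your argument invokes exchangeability of the level-$\ell$ particles (``equidistribution of the level samples'') when you replace $\|A_i\|_{L^{2p}}$ by $\|A\|_{L^{2p}}$.  This is legitimate---the MLEnKF particles within a level are exchangeable because the common gain $K^{\rm ML}_n$ is a symmetric function of them---but it is worth noting that the paper's Jensen step $\E{E_{M_\ell}[\,\cdot\,]} = \E{\,\cdot\,}$ tacitly uses the same fact.  Your final bookkeeping, counting each index $\ell$ at most twice across the telescoping endpoints, is exactly the relabeling the paper leaves implicit in the phrase ``summing the contributions of $I_1$ and $I_2$ over all levels''.
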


\begin{proof}
From the definitions of the sample covariance~\eqref{eq:sampleCov} and multilevel sample covariance~\eqref{eq:mlSampleMoments},
one obtains the bounds
\[
\begin{split}
\|C^{\rm ML}_n - \bar{C}^{\rm ML}_n\|_{p} & \leq 
 \sum_{\ell=0}^L \Big( \|\cov_{M_\ell}[v_n^\ell] - \cov_{M_\ell}[\mfv_n^\ell]\|_{p}  \\
&+ \|\cov_{M_\ell}[v_n^{\ell-1}] - \cov_{M_\ell}[\mfv_n^{\ell-1}]\|_{p}\Big), 
\end{split}
\]
and
\[
\begin{split}
\norm{\cov_{M_\ell}[v_n^\ell] - \cov_{M_\ell}[\mfv_n^\ell]}_{p}
&\leq \norm{E_{M_\ell}[v_n^\ell \otimes v_n^\ell] - E_{M_\ell}[\mfv_n^\ell \otimes \mfv_n^\ell]}_{p} \\
 & + \norm{E_{M_\ell}[v_n^\ell] \otimes E_{M_\ell}[ v_n^\ell] - E_{M_\ell}[\mfv_n^\ell ] \otimes E_{M_\ell}[\mfv_n^\ell]}_{p}\\
& \eqcolon I_{1} + I_{2}.
\end{split}
\]
The bilinearity of the sample covariance yields that
\begin{equation}\label{eq:I1}
I_1 \leq \norm{E_{M_\ell}[(v_n^\ell - \mfv_n^\ell) \otimes v_n^\ell] }_{p}
 + \norm{E_{M_\ell}[\mfv_n^\ell \otimes (v_n^\ell - \mfv_n^\ell) ]}_{p}
\end{equation}
and 
\begin{equation*}\label{eq:I2}
I_2 \leq \norm{E_{M_\ell}[(v_n^\ell - \mfv_n^\ell)]  \otimes E_{M_\ell}[v_n^\ell] }_{p} 
 + \norm{E_{M_\ell}[\mfv_n^\ell\hh{]} \otimes E_{M_\ell}[(v_n^\ell - \mfv_n^\ell) ]}_{p}.
\end{equation*}
For bounding $I_1$ we use Jensen's and H\"older's inequalities: 
\[
\begin{split}
\norm{E_{M_\ell}[(v_n^\ell - \mfv_n^\ell) \otimes v_n^\ell] }_{p}^p
& = \E{ \hhNorm{E_{M_\ell}[(v_n^\ell - \mfv_n^\ell)  \otimes v_n^\ell]}^p}\\
& \leq \E{E_{M_\ell}\Big[\hNorm{v_n^\ell - \mfv_n^\ell}^p  \hNorm{v_n^\ell}^p\Big]} \\
& = \E{ \hNorm{v_n^\ell - \mfv_n^\ell}^p  \hNorm{v_n^\ell}^p} \\
& \leq \norm{v_n^\ell - \mfv_n^\ell}_{2p}^p \norm{v_n^\ell}_{2p}^p.
\end{split}
\]
The second summand of inequality~\eqref{eq:I1} is bounded similarly, and we obtain
\[
I_1 \leq \norm{v_n^\ell - \mfv_n^\ell}_{2p} 
\parenthesis{\norm{v_n^\ell}_{2p} + \norm{\mfv_n^\ell}_{2p}}.
\]
The $I_2$ term can also be bounded with similar steps 
as in the preceding argument so that also
\[
I_2 \leq \norm{v_n^\ell - \mfv_n^\ell}_{2p}  
\parenthesis{\norm{v_n^\ell}_{2p }+ \norm{\mfv_n^\ell}_{2p}}.
\]
The proof is finished by summing the contributions of $I_1$ and $I_2$ over all levels.
\end{proof}

It has just been shown that the second term of \eqref{eq:covspliteps} is ``close in the predicting ensembles".
Therefore, the error level of the first term will carry over between observation times by induction.  
This is made rigorous by the next lemma.

\begin{lemma}[Distance between ensembles.] 
\label{lem:ensdist}
Suppose Assumptions~\ref{ass:psilip} and~\ref{ass:mlrates} hold and
for any $\varepsilon>0$,  let $L$ and $\{M_\ell\}_{\ell=0}^L$ be defined as in Theorem~\ref{thm:main}.
Then the following inequality holds for any $n \in \bbN$ and $p \ge 2$,
\begin{equation}
\sum_{\ell=0}^L  \|\hv_{n}^{\ell}- \hmfv_{n}^{\ell}\|_{L^p(\Omega; \cH)} \lesssim |\log(\varepsilon)|^n\varepsilon.
\label{eq:ensdist}
\end{equation}
\end{lemma}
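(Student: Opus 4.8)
The plan is to prove \eqref{eq:ensdist} by induction on the observation time $n$, at each step chaining together (a) the Kalman gain error estimate of Lemma~\ref{lem:gce}, (b) the covariance splitting of Theorem~\ref{thm:covspliteps} together with Lemma~\ref{lem:mlcov}, and (c) the Lipschitz stability of the forward map from Assumption~\ref{ass:psilip}(i) applied level-by-level to close the recursion. For $n=0$ the left side vanishes since the MLEnKF ensemble and the mean-field multilevel ensemble share initial data and noise realizations. For the inductive step, suppose $\sum_{\ell} \|\hv_{n}^\ell - \hmfv_{n}^\ell\|_p \lesssim |\log\varepsilon|^n \varepsilon$. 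First propagate through the predict step \eqref{eq:DlvHatDef}: by Assumption~\ref{ass:psilip}(i), $\|v_{n+1}^\ell - \mfv_{n+1}^\ell\|_p = \|\psiL(\hv_n^\ell) - \psiL(\hmfv_n^\ell)\|_p \le c_\Psi \|\hv_n^\ell - \hmfv_n^\ell\|_p$, so the same bound (up to the constant $c_\Psi$) holds for the \emph{predicting} ensembles summed over levels.

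Next feed this into the covariance estimate. Lemma~\ref{lem:mlcov} gives
\[
\|C^{\rm ML}_{n+1} - \bar C^{\rm ML}_{n+1}\|_p \le 4 \sum_{\ell=0}^L \|v_{n+1}^\ell - \mfv_{n+1}^\ell\|_{2p}\big(\|v_{n+1}^\ell\|_{2p} + \|\mfv_{n+1}^\ell\|_{2p}\big),
\]
and since the processes $v_{n+1}^\ell$, $\mfv_{n+1}^\ell$ are uniformly bounded in $L^{2p}(\Omega;\cH)$ (Proposition~\ref{prop:reg} and the analogous remark for the MLEnKF/mean-field multilevel ensembles), this is $\lesssim \sum_\ell \|v_{n+1}^\ell - \mfv_{n+1}^\ell\|_{2p} \lesssim |\log\varepsilon|^{n+1}\varepsilon$ by the inductive hypothesis propagated through predict. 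Combining with Theorem~\ref{thm:covspliteps}, which states $\|C^{\rm ML}_{n+1} - \mfc_{n+1}\|_p \lesssim \varepsilon + \|C^{\rm ML}_{n+1} - \bar C^{\rm ML}_{n+1}\|_p$, yields $\|C^{\rm ML}_{n+1} - \mfc_{n+1}\|_p \lesssim |\log\varepsilon|^{n+1}\varepsilon$. Lemma~\ref{lem:gce} then converts this to a gain bound $\|K^{\rm ML}_{n+1} - \mfk_{n+1}\|_p \lesssim |\log\varepsilon|^{n+1}\varepsilon$ (the constant $\tilde c_{n+1}$ depending on fixed mean-field quantities).

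Finally, estimate the update difference. From \eqref{eq:upsamps} and the analogous mean-field update, for each level
\[
\hv_{n+1}^\ell - \hmfv_{n+1}^\ell = (I - \cP_\ell K^{\rm ML}_{n+1} H)(v_{n+1}^\ell - \mfv_{n+1}^\ell) + \cP_\ell (K^{\rm ML}_{n+1} - \mfk_{n+1})(\tilde y_{n+1}^\ell - H\mfv_{n+1}^\ell),
\]
so $\|\hv_{n+1}^\ell - \hmfv_{n+1}^\ell\|_p \le \|I - \cP_\ell K^{\rm ML}_{n+1} H\|\,\|v_{n+1}^\ell - \mfv_{n+1}^\ell\|_p + \|K^{\rm ML}_{n+1} - \mfk_{n+1}\|\,(\|\tilde y_{n+1}^\ell\|_p + \|H\mfv_{n+1}^\ell\|_p)$. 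The operator norm $\|I - \cP_\ell K^{\rm ML}_{n+1} H\|$ must be controlled uniformly in $\ell$ and in $\varepsilon$ — here one writes it as $I - \cP_\ell \mfk_{n+1} H + \cP_\ell(\mfk_{n+1} - K^{\rm ML}_{n+1})H$, bounds the first piece by a deterministic constant from the mean-field process and the second by the already-established gain error. Summing over $\ell = 0,\dots,L$ and using that $L \eqsim |\log\varepsilon|$ contributes at most one extra logarithmic factor (the gain error term, being $\ell$-independent, is summed $L+1$ times), we obtain $\sum_\ell \|\hv_{n+1}^\ell - \hmfv_{n+1}^\ell\|_p \lesssim |\log\varepsilon|^{n+1}\varepsilon$, completing the induction. \textbf{The main obstacle} I expect is the uniform-in-$\ell$ control of $\|I - \cP_\ell K^{\rm ML}_{n+1}H\|$: because $K^{\rm ML}$ is itself random and only close to $\mfk$ in $L^p$, one needs the triangle-inequality splitting above to avoid a circular dependence, and one must be careful that the summation over the $L+1 \eqsim |\log\varepsilon|$ levels of the $\ell$-independent gain-error term is exactly what produces the extra $|\log\varepsilon|$ per time step, accounting for the $|\log\varepsilon|^n$ prefactor in the final bound.
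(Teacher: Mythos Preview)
Your proposal is correct and follows essentially the same induction scheme as the paper: base case trivial from shared initial data; predict step via Assumption~\ref{ass:psilip}(i); covariance error via Lemma~\ref{lem:mlcov} plus Theorem~\ref{thm:covspliteps}; gain error via Lemma~\ref{lem:gce}; and the extra $|\log\varepsilon|$ per time step arising from summing the $\ell$-independent gain-error contribution over the $L+1\eqsim|\log\varepsilon|$ levels.

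The one cosmetic difference is your update-step decomposition. You write
\[
\hv_{n+1}^\ell-\hmfv_{n+1}^\ell=(I-\cP_\ell K^{\rm ML}_{n+1}H)(v_{n+1}^\ell-\mfv_{n+1}^\ell)+\cP_\ell(K^{\rm ML}_{n+1}-\mfk_{n+1})(\tilde y_{n+1}^\ell-H\mfv_{n+1}^\ell),
\]
which forces you to control the \emph{random} operator $I-\cP_\ell K^{\rm ML}_{n+1}H$ and then split it back into the mean-field piece plus the gain error. The paper instead chooses the algebraically equivalent decomposition
\[
\hv_{n+1}^\ell-\hmfv_{n+1}^\ell=(I-\cP_\ell\mfk_{n+1}H)(v_{n+1}^\ell-\mfv_{n+1}^\ell)+\cP_\ell(K^{\rm ML}_{n+1}-\mfk_{n+1})(\tilde y_{n+1}^\ell-H v_{n+1}^\ell),
\]
so the leading operator is deterministic from the outset and bounded by $1+\hhNorm{\mfk_{n+1}H}$ with no further work. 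Your ``main obstacle'' is therefore an artifact of the decomposition you picked rather than a genuine difficulty; swapping to the paper's version removes it entirely. Either way the bookkeeping closes and the $|\log\varepsilon|^n$ factor emerges exactly as you describe. (Minor slip: after the predict step the sum $\sum_\ell\|v_{n+1}^\ell-\mfv_{n+1}^\ell\|_{2p}$ is still $\lesssim|\log\varepsilon|^n\varepsilon$, not $|\log\varepsilon|^{n+1}\varepsilon$; the extra logarithm only enters when you sum the gain-error term over levels.)
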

\begin{proof}

We use an induction argument. Notice first of all that by definition,
\[
\|v_{0}^{\ell}- \mfv_{0}^{\ell}\|_{p} = 0
\]
Assume that for $p\geq 2$,
\begin{equation}
\sum_{\ell=0}^L \norm{\hv_{n-1}^{\ell}- \hmfv_{n-1}^{\ell}}_{p} \lesssim |\log(\varepsilon)|^{n-1}\varepsilon.
\label{eq:ind1ens}
\end{equation}
By Assumption~\ref{ass:psilip}(i),
the following inequality \hh{holds for the prediction ensemble}:
\begin{equation}
\sum_{\ell=0}^L \|v_n^{\ell} - \mfv_n^\ell\|_{p} 
\leq  \sum_{\ell=0}^L c_{\Psi} \|\hv_{n-1}^{\ell} - \hmfv_{n-1}^\ell\|_{p} \lesssim |\log(\varepsilon)|^{n-1}\varepsilon. 
\label{eq:indlip1ens}
\end{equation}
Furthermore, by 
Lemma \ref{lem:gce}, 
\begin{equation}\label{eq:appearanceOfCn}
\begin{split}
\hNorm{\hv_n^{\ell} - \hmfv_n^\ell}  &\leq   
 \hNorm{v_n^{\ell} - \mfv_n^\ell} \\
& +  \tilde c_n  \hhNorm{C^{\rm ML}_n - C_n}
\Big(\hNorm{v_n^{\ell} - \mfv_n^\ell} + \hNorm{y_n^{\ell} - \mfv_n^\ell}\Big),
\end{split}
\end{equation}
for all $\ell=0, \ldots, L$.
H{\"o}lder's inequality then implies
\begin{equation*}
\begin{split}
&  \|\hv_n^{\ell} - \hmfv_n^\ell\|_{p}  \leq   
 \|v_n^{\ell} - \mfv_n^\ell\|_{p} \\
&  \qquad +  \tilde c_n  \|{C}^{\rm ML}_n - \mfc_n\|_{2p}  
\Big ( \|v_n^{\ell} - \mfv_n^\ell \|_{2p} 
+ \|y_n^{\ell} - \mfv_n^\ell\|_{2p} \Big ).  
\end{split}
\end{equation*}
Plugging the moment bounds~\eqref{eq:indlip1ens} 
into the right-hand side of the inequality~\eqref{eq:mlcov}
yields that
$\|C^{\rm ML}_n - \bar{C}^{\rm ML}_n\|_{2p} \lesssim |\log(\varepsilon)|^{n-1}\varepsilon$,
which in combination with Theorem~\ref{thm:covspliteps}
leads to
$\|C^{\rm ML}_n - {C}_n\|_{2p} \lesssim |\log(\varepsilon)|^{n-1}\varepsilon$.
Therefore, summing the above and using \eqref{eq:indlip1ens} again for $p, 2p$ 
\begin{align*}
\nonumber
 \sum_{\ell=0}^L \|\hv_n^{\ell} - \hmfv_n^\ell\|_{p} 
& \lesssim 
\sum_{\ell=0}^L  \Big\{ \|v_n^{\ell} - \mfv_n^\ell\|_{p}  
 + \varepsilon  \left( \|v_n^{\ell} -    \mfv_n^\ell\|_{2p} +  
 \|y_n^{\ell} - \mfv_n^\ell\|_{2p} \right) \Big\} \\
& \lesssim |\log(\varepsilon)|^{n-1} \varepsilon \Big(1+ \sum_{\ell=0}^L\|y_n^{\ell} -
  \mfv_n^\ell\|_{2p}\Big)\\
& \lesssim |\log(\varepsilon)|^{n}\varepsilon.
\end{align*}

\end{proof}

Induction is complete on the distance between the multilevel ensemble
and its i.i.d.~shadow in $L^p(\Omega; \cH)$, and we are finally ready to 
prove the main result. 

\begin{proof}[Proof of Theorem~\ref{thm:main}] 

  
By Minkowski's inequality, 
\begin{align}
\nonumber
\|\hat \mu^{\rm ML}_n (\varphi) - \hat \mfmu_n (\varphi)\|_{p} & \leq 
\|\hat \mu^{\rm ML}_n (\varphi) - \hat \mfmu^{\rm ML}_n (\varphi)\|_{p}  
 + \|\hat\mfmu^{\rm ML}_n (\varphi) - \hat \mfmu^L_n(\varphi)\|_{p} \\
&+ \|\hat \mfmu^L_n(\varphi) - \hat \mfmu_n (\varphi)\|_{p},
\label{eq:lpertri}
\end{align}
where $\hat \mfmu^{\rm ML}_n$ denotes the empirical measure associated to 
the mean-field multilevel ensemble, and $\hat \mfmu^L_n$ denotes the probability measure 
associated to $\hmfv^L$. Before treating each term separately, we notice that the 
two first summands of the right-hand side of 
the inequality relates to the statistical error, whereas the last relates to the bias.

{By the global Lipschitz continutity of the observable $\varphi$,
Minkowski's inequality, and Lemma~\ref{lem:ensdist} the first term satisfies the following bound 
\begin{equation} \label{eq:finensembles}
\begin{split}
\norm{\hat \mu^{\rm ML}_n (\varphi) - \hat \mfmu^{\rm ML}_n (\varphi)}_{p} 
&= \left\| \sum_{\ell=0}^L E_{M_\ell}\big[\varphi(\hv_n^{\ell}) - \varphi(\hv_n^{\ell-1}) - 
(\varphi(\hmfv_n^\ell) - \varphi(\hmfv_n^{\ell-1}))\big] \right\|_{p}\\
& \leq \sum_{\ell=0}^L \parenthesis{\norm{ \varphi(\hv_n^{\ell}) - \varphi(\hmfv_n^\ell) }_{p}
+ \norm{ \varphi(\hv_n^{\ell-1}) - \varphi(\hmfv_n^{\ell-1}) }_{p}} \\
& \leq C_\varphi \sum_{\ell=0}^L \parenthesis{\norm{ \hv_n^{\ell} - \hmfv_n^\ell }_{p}
+ \norm{ \hv_n^{\ell-1} - \hmfv_n^{\ell-1} }_{p}} \\
& \lesssim |\log(\varepsilon)|^n\varepsilon.
\end{split}
\end{equation}

For the second summand~{of~\eqref{eq:lpertri}}, notice that we can
write $\hat \mfmu^L_n = \sum_{\ell=0}^L \hat \mfmu^\ell_n - \hat
\mfmu^{\ell-1}_n$, where $\hat \mfmu^\ell_n$ is the measure associated
to the level $\ell$ limiting process $\hmfv^\ell$ and $\mfmu^{-1}_n \coloneq 0$.
Then, by virtue of
Lemmas ~\ref{lem:disccov} and~\ref{lem:mz} and the global Lipschitz continuity
of $\varphi$,
\begin{equation}\label{eq:finvar}
 \begin{split}
\norm{\hat{\bar{\mu}}^{\rm ML}_n (\varphi) - \hat \mfmu^L_n(\varphi)}_{p}  & \leq
\sum_{\ell=0}^L \norm{E_{M_\ell}\Big[\varphi(\hmfv_n^\ell) - \varphi(\hmfv_n^{\ell-1}) - 
\E{\varphi(\hmfv_n^\ell) - \varphi(\hmfv_n^{\ell-1})} \Big]}_{p}  \\
 & \leq c \sum_{\ell=0}^L M_\ell^{-1/2}\norm{ \varphi(\hmfv_n^\ell) - \varphi(\hmfv_n^{\ell-1}) }_{p} \\
  & \leq \tilde c \sum_{\ell=0}^L M_\ell^{-1/2}\| \hmfv_n^\ell - \hmfv_n^{\ell-1}\|_{p} \\
& \lesssim \sum_{\ell=0}^L M_\ell^{-1/2}h_\ell^{\beta/2} \lesssim \varepsilon.
 \end{split}
 \end{equation}
}
Finally, the bias term in~\eqref{eq:lpertri} satisfies 
\begin{equation}\label{eq:finbias}
\begin{split}
\|\hat \mfmu^L_n(\varphi) - \hat \mfmu_n (\varphi)\|_{p} 
 =  |\hat \mfmu^L_n(\varphi) - \hat \mfmu_n (\varphi)| 
 = \abs{\E{\varphi(\hmfv^L_n) - \varphi(\hmfv_n)} }
\lesssim \varepsilon,
\end{split}
\end{equation}
where the last step follows from the Lipschitz property and Lemma~\ref{lem:disccov}.

Inequalities~\eqref{eq:finensembles}, \eqref{eq:finvar}, and \eqref{eq:finbias}
together with inequality~\eqref{eq:lpertri} complete the proof.
\end{proof}

Theorem~\ref{thm:main} shows the cost-to-$\varepsilon$ performance of
MLEnKF. 
The geometrically growing logarithmic penalty in the error 
\eqref{eq:lperror} is disconcerting.
The same penalty appears in the work \cite{ourmlenkf}, 
yet the numerical results there indicate a time-uniform rate of convergence,
and this may be an artifact of the rough bounds. 
We believe ergodicity of the MFEnKF process would 
allow us to obtain linear growth or even a uniform bound.
There has been much recent work in this direction.  
The interested reader
is referred to the works \cite{del2016stability, del2016stability1, majda2016rigorous, majda2016robustness, tong2016nonlinear}.

We conclude this section with a comparable result on the
cost-to-$\varepsilon$ perfomance of EnKF, showing that
MLEnKF generally outperforms EnKF.

\begin{theorem}[EnKF accuracy vs. cost]\label{thm:mainEnKF}
  {Consider a globally Lipschitz continuous observable function
  $\varphi:\cH \to \bbR$, and suppose Assumptions~\ref{ass:psilip}
  and~\ref{ass:mlrates} hold. For a given $\varepsilon >0$, let $L$
  and $M$ be defined under the respective constraints
  $L =  \lceil 2\log_\kappa(\varepsilon^{-1})/\beta\rceil$ and $ M \eqsim \varepsilon^{-2}$.
  Then, for any $n \in \bbN$ and $p \ge 2$,}
\begin{equation}
\|\hat{\mu}^{\rm MC}_n (\varphi) - \hat{\bar{\mu}}_n (\varphi) \|_p \lesssim \varepsilon,
\label{eq:lperrorEnKF}
\end{equation}
where $\hat{\mu}^{\rm MC}_n$ denotes the EnKF empirical measure,
cf.~equation~\eqref{eq:emp}, with particle evolution given by the EnKF
predict and update formulae at resolution level $L$ (i.e., with the numerical
integrator $\Psi^L$ in the prediction and projection operator $\cP_L$ in the update).

The computational cost of the EnKF estimator over the
time sequence is bounded by
\begin{equation}\label{eq:mlenkfCostsEnKF}
\cost{\mathrm{EnKF}} \lesssim \varepsilon^{-2(1+d\gamma/\beta)}.
\end{equation}
\end{theorem}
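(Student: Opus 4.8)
The plan is to carry out the single-level specialization of the proof of Theorem~\ref{thm:main}, taking the hierarchy to consist of the single resolution level $L$ carrying one ensemble $\{\hv^L_{n,i}\}_{i=1}^M$, initialized to agree particle-by-particle with $M$ i.i.d.\ draws of the mean-field ensemble, $\hv^L_{0,i}=\cP_L\hmfv_{0,i}$. The decisive simplification is that every sum $\sum_{\ell=0}^L$ occurring in the multilevel argument is now absent; in particular the quantities $\sum_{\ell=0}^L\|y^\ell_n-\mfv^\ell_n\|_{2p}$ in Lemma~\ref{lem:ensdist} and $\sum_{\ell=0}^L M_\ell^{-1/2}h_\ell^{\beta/2}$ in Lemma~\ref{lem:iidcover} and in~\eqref{eq:finvar}---the two places where the multilevel estimate loses a factor $|\log(\varepsilon)|$ per observation time---collapse to a single bounded summand, which is why the right-hand side of~\eqref{eq:lperrorEnKF} carries no logarithmic penalty. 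As in~\eqref{eq:lpertri} I would introduce the mean-field EnKF ensemble $\{\hmfv^L_{n,i}\}_{i=1}^M$, evolving under the \emph{same} noise realizations but with the continuum mean-field gain $\mfk_n$; since $\mfk_n$ is deterministic, the $\hmfv^L_{n,i}$ are i.i.d.\ copies of $\hmfv^L_n$ and $\hat\mfmu^L_n(\varphi)=\E{\varphi(\hmfv^L_n)}$. Minkowski's inequality then gives
\begin{multline*}
\|\hat\mu^{\rm MC}_n(\varphi)-\hat\mfmu_n(\varphi)\|_p \le
\|\hat\mu^{\rm MC}_n(\varphi)-E_M[\varphi(\hmfv^L_n)]\|_p \\
+\|E_M[\varphi(\hmfv^L_n)]-\E{\varphi(\hmfv^L_n)}\|_p
+\big|\E{\varphi(\hmfv^L_n)}-\E{\varphi(\hmfv_n)}\big| ,
\end{multline*}
where the first two summands constitute the statistical error and the third the discretization bias.

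The bias term is bounded at once: $|\E{\varphi(\hmfv^L_n)}-\E{\varphi(\hmfv_n)}|\le C_\varphi\|\hmfv^L_n-\hmfv_n\|_p\lesssim\varepsilon$ by global Lipschitz continuity and Lemma~\ref{lem:disccov}~\eqref{eq:disccov1}. The pure Monte Carlo term is $\lesssim M^{-1/2}\|\varphi(\hmfv^L_n)-\E{\varphi(\hmfv^L_n)}\|_p\lesssim M^{-1/2}\eqsim\varepsilon$ by the Marcinkiewicz--Zygmund estimate (Lemma~\ref{lem:mz}), using $\hmfv^L_n\in L^p(\Omega;\cH)$ (by the argument of Proposition~\ref{prop:reg}) and the Lipschitz property. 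Since $\|\hat\mu^{\rm MC}_n(\varphi)-E_M[\varphi(\hmfv^L_n)]\|_p\le C_\varphi\|\hv^L_n-\hmfv^L_n\|_p$, the crux is the single-level analogue of Lemma~\ref{lem:ensdist}: $\|\hv^L_n-\hmfv^L_n\|_p\lesssim\varepsilon$ for each fixed $n$, with an $n$-dependent but $\varepsilon$-independent constant.

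I would establish this by induction over observation times, following~\eqref{eq:ind1ens}--\eqref{eq:appearanceOfCn} with the level-sum deleted. The base case $\|\hv^L_0-\hmfv^L_0\|_p=0$ holds since the initial ensembles coincide. For the inductive step, Assumption~\ref{ass:psilip}(i) propagates the time-$(n-1)$ bound to the prediction, $\|v^L_n-\mfv^L_n\|_q\lesssim\varepsilon$ for every $q\ge2$; the Kalman gain error Lemma~\ref{lem:gce} then yields, as in~\eqref{eq:appearanceOfCn},
\[
\|\hv^L_n-\hmfv^L_n\|_p \lesssim \|v^L_n-\mfv^L_n\|_p + \|C^{\rm MC}_n-\mfc_n\|_{2p}\,\big(\|v^L_n-\mfv^L_n\|_{2p}+\|y^L_n-\mfv^L_n\|_{2p}\big) ,
\]
so that, using the prediction bound and the $L^p$-boundedness of the mean-field quantities, it remains to show $\|C^{\rm MC}_n-\mfc_n\|_{2p}\lesssim\varepsilon$. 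This is obtained, as in the proof of Theorem~\ref{thm:covspliteps} but without any multilevel sample covariance, from a three-term Minkowski split of $C^{\rm MC}_n-\mfc_n=\cov_M[v^L_n]-\mfc_n$ into the sample-covariance coupling error $\cov_M[v^L_n]-\cov_M[\mfv^L_n]$, the centered fluctuation $\cov_M[\mfv^L_n]-\mfc^L_n$, and the discretization error $\mfc^L_n-\mfc_n$: the first summand is $\lesssim\varepsilon$ by the single-level form of Lemma~\ref{lem:mlcov} together with the prediction bound; the second is $\lesssim M^{-1/2}\eqsim\varepsilon$ by the $\cO(M^{-1/2})$ sample-covariance error Lemma~\ref{lem:covM-Lr-error}, since $\cov_M[\mfv^L_n]$ is an unbiased estimator of $\mfc^L_n$; and the third is $\lesssim\varepsilon$ by Lemma~\ref{lem:disccov}~\eqref{eq:disccov2}. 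Feeding these into the displayed inequality closes the induction, and combining the three summands of the original split gives~\eqref{eq:lperrorEnKF}.

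For the cost~\eqref{eq:mlenkfCostsEnKF}, over the fixed number $n$ of observation times each prediction step costs $M\,\mathrm{Cost}(\Psi^L)\lesssim Mh_L^{-d\gamma}$ and, by the count of Proposition~\ref{prop:covcost} with $m\ll N_0\le N_L$ and $\gamma\ge1$, each update step costs $\lesssim m\,N_L\,M$, which is dominated by the prediction cost; hence $\cost{\mathrm{EnKF}}\lesssim nMh_L^{-d\gamma}$, and with $M\eqsim\varepsilon^{-2}$ and $h_L^{\beta/2}\eqsim\varepsilon$ (so $h_L^{-d\gamma}\eqsim\varepsilon^{-2d\gamma/\beta}$) this is $\lesssim\varepsilon^{-2(1+d\gamma/\beta)}$. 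The only genuinely new content beyond the already-established lemmas and~\cite{ourmlenkf} is verifying that this covariance-error induction closes at level $\cO(\varepsilon)$ rather than $\cO(|\log(\varepsilon)|^n\varepsilon)$; I expect that---rather modest---bookkeeping, namely confirming that the two $\ell$-sums responsible for the logarithmic factors in the multilevel proof degenerate here to single bounded terms, to be the main obstacle, everything else being a routine transcription of the proof of Theorem~\ref{thm:main}.
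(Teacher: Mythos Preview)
Your proposal is correct and follows essentially the same route as the paper's (sketch of) proof: the same three-term Minkowski split into bias, Monte Carlo fluctuation, and ensemble-coupling error, with the same reductions of each term, and the same deferral of the key estimate $\|\hv^L_n-\hmfv^L_n\|_p\lesssim\varepsilon$ to an induction mirroring Lemma~\ref{lem:ensdist}. You supply more detail than the paper does---in particular the covariance-error split and the cost derivation---and you correctly note that the single $\ell$-summand is what removes the $|\log(\varepsilon)|^n$ penalty (a minor quibble: in the multilevel proof that factor enters only through the sum $\sum_\ell\|y_n^\ell-\mfv_n^\ell\|_{2p}$ in Lemma~\ref{lem:ensdist}, not through the sums in Lemma~\ref{lem:iidcover} or~\eqref{eq:finvar}, which already give $\lesssim\varepsilon$).
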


\begin{proof}[Sketch of proof]
By Minkowski's inequality,
\[
\begin{split}
\|\hat{\bar{\mu}}_n(\varphi) -\hat{\mu}^{\rm MC}_n (\varphi) \|_p & \leq 
\norm{\hat{\bar{\mu}}_n(\varphi) -  \hat{\bar{\mu}}_n^L (\varphi)}_p
+\norm{\hat{\bar{\mu}}^L(\varphi) - \hat{\bar{\mu}}_n^{\rm MC} (\varphi) }_p \\
& +  \norm{\hat{\bar{\mu}}_n^{\rm MC} (\varphi) -\hat{\mu}^{\rm MC}_n (\varphi)}_p
 \eqcolon I + II + III,
\end{split}
\]
where $\hat{\bar{\mu}}_n^{\rm MC}$ denotes the empricial measure associated
to the EnKF ensemble $\{\hmfv^L_{n,i} \}_{i=1}^M$ and
$\hat{\bar{\mu}}^{L}_n$ denotes the emprical measure associated to
$\hmfv^L_n$. 
It follows by inequality~\eqref{eq:finbias} that $I \lesssim \varepsilon$.

{For the second term, 
{note that \eqref{eq:localLipschitz} guarantees the existence of
a positive scalar $C_\varphi$ such that
$|\varphi(x)| \leq C_\varphi (1+\hNorm{x})$}.
Since $\hat{\bar{v}}_n^L \in L^p(\Omega; \cH)$ for any $n \in \bbN$ and $p\ge 2$,
it follows by Lemma~\ref{lem:mz} (on the Hilbert space $\cR_1$) that
\begin{equation*}
\begin{split}
II \leq \norm{E_{M}[ \varphi(\hmfv^L_n)] - \E{\varphi(\hmfv^L_n)} }_p  \leq M^{-1/2} C_\varphi\norm{\hmfv^L_n }_p 
\lesssim \varepsilon.
\end{split}
\end{equation*}

For the last term, let us first assume that for any $p \ge 2$ and 
 $n \in \bbN$, 
\begin{equation}\label{eq:termIIIAssumption}
\norm{\hv_{n,1}^L -  \hmfv_{n,1}^L }_p \lesssim \varepsilon,
\end{equation}
for the {single particle dynamics $\hv_{n,1}^L$ and $\hmfv_{n,1}^L$ respectively
 associated to the EnKF ensemble $\{\hv_{n,i}^L \}_{i=1}^M$ and 
the mean-field EnKF ensemble $\{\hmfv_{n,i}^L\}_{i=1}^M$.
Then the global Lipschitz continuity of $\varphi$, the fact
that $\hv_{n,1}^L, \hmfv_{n,1}^L \in L^p(\Omega; \cH)$}
for any $n\in \bbN$ and  $p\ge 2$,
and Minkowksi's inequality yield that
\[
\begin{split}
III = \norm{ E_{M}[ \varphi(\hv_n^L) - \varphi(\hmfv_n^L)] }_p
\leq C_\varphi \norm{\hv_n^L - \hmfv_n^L }_p 
\lesssim \varepsilon.
\end{split}
\]}
\noindent All that remains is to verify~\eqref{eq:termIIIAssumption}, but we
omit this verification as it can be done by similar steps as in
the proof of inequality~\eqref{eq:ensdist}.

\end{proof}

\section{A concrete example} 
\label{sec:example}

Consider the stochastic heat equation, given  
abstractly as follows:
\begin{equation}
du = - A u dt + B dW, \quad u(0) \sim N(0, C_0),
\label{eq:she}
\end{equation}
where $A$ is the abstract representation of $(-\Delta)$ acting on the space
$\cH := \{u \in L^2(D); \int_{D} u(x) dx = 0\}$, $B=A^{-b}$ 
for some $b \geq 0$ and $C_0=A^{-a}$ 
for some $a \geq 0$.  Let $D=[-\pi,\pi]^d$.  
The standard Sobolev spaces are defined as follows.
{\begin{definition}\label{def:sobolev}
  $\cH^s$ is defined as the space
  $\{ u \in \cH ; \langle u, A^s u \rangle_\cH { < \infty}\}$, 
  with the associated norm $\|u\|_{\cH^s} = \langle u, A^s u \rangle_\cH$.
\end{definition}
}
Consider the Fourier 
basis $\{\phi_k\}_{k=-\infty}^\infty$ such that $\phi_k(x)=e^{-ik\cdot x}$, $i=\sqrt{-1}$, and for $u\in\cH$
one has 
the expansion $u=\sum_{k=-\infty}^\infty u_k \phi_k(x)$ subject to reality constraint $u_{-k} = u_{k}^*$, and with 
$u_0=0$. One has spectral expansions 
$A= \sum_{k=-\infty}^\infty |k|^2 \phi_k \otimes \phi_k$,
$B= \sum_{k=-\infty}^\infty b_k \phi_k \otimes \phi_k$, and
$C= \sum_{k=-\infty}^\infty c_k \phi_k \otimes \phi_k$.  
The solution for $u_k$, $k\geq1$, is given analytically as 
\begin{equation}
u_k(t) = e^{-k^2t} u_k(0) + \xi_k(t), \quad 
\xi_k(t) \sim N\left[0,\frac{b^2_k}{2 k^2}(1-e^{-2k^2t})\right] \perp u_k(0).
\label{eq:she_exactmode}
\end{equation}
For observation increment $\tau$, the observations are taken as 
\begin{equation}
y_n = H u(\tau n) + \eta_n, \quad \eta_n \sim N(0,\Gamma) ~~ {\rm i.i.d.} \perp u(0) , 
\xi_k(\tau n) ~\forall~ k.
\label{eq:she_obs}
\end{equation} 
{The observation operator may be taken as
$H u = [H_1(u), \dots, H_m(u)]^\transpose$, where
$H_i(u) = \int u(x)\psi_i(x) dx$ for some $\psi_i\in \cH$.
Notice the model is non-trivial as
correlations will arise from the update unless $\psi_i=\phi_k$ for some $k$.}  


Note that for this simple Gaussian model one simply requires that 
$u \in L^2(\Omega;\cH)$, 
since all other moments are controlled by the variance.  
Indeed if $\bbE \|u\|^2_\cH <\infty$, 
then $u\in L^p(\Omega;\cH)$ for all $p$ 
and $u \in \cH$ almost surely.

Following from \eqref{eq:she_exactmode} define 
\begin{equation}
\Psi(u) := \sum_{k=0}^\infty\left ( e^{-k^2} u_k + \xi_k \right ) \phi_k, \quad \xi_k \sim N\left[0,\frac{b^2_k}{2 k^2}(1-e^{-2k^2})\right],
\label{eq:psiex}
\end{equation}
where $u_k=\langle \phi_k,u\rangle$. 
Notice that the regularity of $\Psi(u)$ does not depend on  $u$ at all (assuming it is not exponentially rough).
Indeed by the assumed form of $B$, one has $b_k = \cO(k^{-2b})$. 
Notice 
$$\bbE \|\Psi(u)\|^2_{\cH^{s}} = \sum_{k=1}^\infty k^{2s}
\left( e^{-2k^2} \bbE u^2_k + \frac{1}{2 k^{2(2b+1)}}(1-e^{-2k^2})\right).$$ 
Therefore, 
$\Psi(u) \in \cH^s$
for any $s<2b+1-d/2$. 

Indeed, $\Psi(u)$ is Gaussian with a {\it smoothing} covariance $C$, 
such that for $u\in \cH^s$, one has $C u \in \cH^{s+2 b+1}$.  
Assuming that $H:\cH \rightarrow \bbR^m$ 
is defined by $\cH$ inner products, 
then $H^*: \bbR^m \rightarrow \cH^*=\cH$.  
Hence the Kalman gain $K : \bbR^m \rightarrow \cH^{2b+1} \subset \cH$, 
following from the form of \eqref{eq:mfupdate}.  

For a concrete example, let $d=1$ and $b=0$.  
Then $\cH:=L^2(D)$, and $u_n\in L^p(\Omega;\cH)$ for all {$n\in \bbN$ 
and $p\geq 2$.}
Assume $\cP_\ell$ is the projection onto $2^\ell$ Fourier modes.  
The $k^{th}$ mode is given by $u_{n,k} = N(u_{n-1,k}e^{-k^2}, \sigma_k^2)$,
where $\sigma_k^2 = \cO(k^{-2})$.
This in turn induces a rate of convergence of 
$$
\|(I-\cP_\ell) u_n \|_{L^2(\Omega;\cH)} = 
\cO\left( \left(\sum_{\{k>2^\ell\}} \sigma_k^2\right)^{1/2}\right) 
= \cO\left(2^{-\ell/2}\right),
$$ 
as $\ell \rightarrow \infty$. {Higher moments follow from Gaussianity, 
with a $p-$dependent constant.}
The other assumptions are easily verified as well.

\section{Conclusion}
\label{sec:conclusion}

An extension of the recent work \cite{ourmlenkf} to spatially extended models is presented here, 
using a hierarchical decomposition based on the spatial resolution parameter.
The proof follows closely that of \cite{ourmlenkf}, except with the important extension to 
infinite-dimensions.
It is shown that {an optimality rate similar to 
vanilla MLMC} 
can extend to the case of sequential inference 
using EnKF for spatial models as well.
One may therefore expect that value can be leveraged, 
for a fixed computational cost,
by spreading work across a multilevel ensemble associated to models of multiple spatial 
resolutions rather than restricting to an ensemble associated only to the finest resolution model and using one very small ensemble.
This has potential for broad impact across application areas in which there has been a recent explosion
of interest in EnKF, for example weather prediction and subsurface exploration.  

\appendix

\section{Marcinkiewicz--Zygmund inequalities for Hilbert spaces}
For closing the proof of Lemma~\ref{lem:iidcover} we make use a couple
of lemmas extending the Marcinkiewicz--Zygmund inequality to separable
Banach spaces.
\begin{lemma}\cite[Theorem 5.2]{kwiatkowski2014convergence}
\label{lem:mz}
Let $2 \leq p < \infty$ and $X_i \in L^p(\Omega;\cH)$ be \iid samples of $X \in L^p(\Omega;\cH)$. Then 
\begin{equation}\label{eq:LawLargeNum-r}
\|E_M[X] - \E{X}\|_{L^p(\Omega;\cH)} \leq \frac{c_p}{\sqrt{M}} \|X - \E{X}\|_{L^p(\Omega;\cH)}
\end{equation}
where $c_p$ only depends on $p$.
\end{lemma}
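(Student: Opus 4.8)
The plan is to recognize~\eqref{eq:LawLargeNum-r} as a Marcinkiewicz--Zygmund inequality in the Hilbert-space-valued setting and to prove it via a square-function (Burkholder--Davis--Gundy) estimate, using that $\cH$, being Hilbert, is $2$-uniformly smooth so that the relevant constants are free of the (possibly infinite) dimension. First I would center the summands: replacing $X_i$ by $X_i-\E{X}$ leaves both sides of~\eqref{eq:LawLargeNum-r} unchanged, so assume $\E{X}=0$ and put $S_k:=\sum_{i=1}^k X_i$, $S_0:=0$. With $\mathcal{F}_k:=\sigma(X_1,\dots,X_k)$ the sequence $(S_k)_{k=0}^M$ is an $\cH$-valued martingale whose increments are precisely the \iid\ summands $X_i$, so that $E_M[X]-\E{X}=\tfrac1M S_M$.

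Next I would apply the $\cH$-valued Burkholder--Davis--Gundy inequality: for $1<p<\infty$ there is $C_p$ depending only on $p$ with $\|S_M\|_{L^p(\Omega;\cH)}\le C_p\,\big\|\,[S]_M^{1/2}\,\big\|_{L^p(\Omega)}$, where $[S]_M=\sum_{i=1}^M\|X_i\|_\cH^2$ is the genuine quadratic variation --- it is here, and essentially only here, that the Hilbert structure enters, since in a general Banach space the square function need not reduce to this expression and dimension-free constants fail. Because $p/2\ge1$, the triangle inequality in $L^{p/2}(\Omega)$ gives
\[
\big\|[S]_M\big\|_{L^{p/2}(\Omega)}\le\sum_{i=1}^M\big\|\,\|X_i\|_\cH^2\,\big\|_{L^{p/2}(\Omega)}=\sum_{i=1}^M\|X_i\|_{L^p(\Omega;\cH)}^2=M\,\|X\|_{L^p(\Omega;\cH)}^2,
\]
using that the $X_i$ are identically distributed; taking square roots yields $\big\|\,[S]_M^{1/2}\,\big\|_{L^p(\Omega)}\le\sqrt M\,\|X\|_{L^p(\Omega;\cH)}$, hence $\|S_M\|_{L^p(\Omega;\cH)}\le C_p\sqrt M\,\|X\|_{L^p(\Omega;\cH)}$, and dividing by $M$ gives~\eqref{eq:LawLargeNum-r} with $c_p=C_p$.

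The single nontrivial ingredient is the dimension-free $\cH$-valued BDG inequality, and I expect justifying it cleanly to be the main point; should one prefer to avoid invoking it, an equally workable and more self-contained route is to symmetrize --- introduce an independent Rademacher sequence $(\varepsilon_i)$ so that $\|S_M\|_{L^p(\Omega;\cH)}\le 2\,\big\|\sum_i\varepsilon_iX_i\big\|_{L^p(\Omega;\cH)}$ by the standard symmetrization lemma for independent mean-zero summands --- then invoke Kahane's inequality to replace this $L^p$ norm by the corresponding $L^2$ norm up to a factor depending only on $p$, and finally use the Rademacher orthogonality $\E{\|\sum_i\varepsilon_iX_i\|_\cH^2\mid(X_i)_i}=\sum_i\|X_i\|_\cH^2$ followed by the same $L^{p/2}(\Omega)$ triangle inequality as above. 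Either way the argument hinges on controlling the $p$-th moment of the sum by its quadratic variation with constants independent of $\dim\cH$ --- exactly the feature that fails outside the type-/cotype-$2$ (here, Hilbert) setting and that motivates the trace-class assumptions maintained throughout.
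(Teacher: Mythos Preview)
Your argument is correct. The primary route you propose --- view the partial sums as an $\cH$-valued martingale and invoke the dimension-free Burkholder--Davis--Gundy inequality, then bound the square function in $L^{p/2}$ by the triangle inequality --- is a genuinely different proof from the paper's. The paper instead works through symmetrization with independent copies $\widetilde X_i=X_i-X_i'$, observes that $\sum_i\widetilde X_i$ is equidistributed with $\sum_i r_i\widetilde X_i$ for Rademacher $r_i$, and then appeals to the R-type~$2$ property of Hilbert spaces (following Woyczy\'nski) to pass to the square function; the final H\"older/$L^{p/2}$ step is the same as yours. Your alternative sketch via Rademacher symmetrization and Kahane's inequality is in spirit very close to what the paper does, the only real difference being that the paper symmetrizes with independent copies rather than with an extrinsic Rademacher sequence and cites type~$2$ rather than Kahane. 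The BDG route is arguably cleaner once one is willing to invoke that machinery, while the paper's route is a bit more self-contained in that R-type~$2$ for Hilbert spaces follows immediately from the parallelogram law.
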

\begin{proof}
Let $r_1, r_2, \ldots$ denote a sequence of real-valued \iid random variables with $P(r_i= \pm 1) = 1/2$.
A Banach space $\cK$ is said to be of R-type $q$ if there exists a $c>0$ such that for  every $\bar n \in \bbN$
and for all (deterministic) $x_1, x_2, \ldots, x_{\bar n} \in \cK$,
\[
\E{ \Big\|\sum_{i=1}^{\bar n} r_i x_i \Big\|_{\cK}} \leq c \parenthesis{\sum_{i=1}^{\bar{n}} \|x_i\|_{\cK}^q}^{1/q}.
\]
It is clear that all Hilbert spaces (and for our interest $\cH$, in particular) are of R-type 2, since their norms are 
induced by an inner product. Following the proofs of~\cite[Proposition 2.1 and Corollary 2.1]{woyczynski1980},
we introduce the symmetrization $\widetilde X_i \coloneq (X_i - X_i')$  and derive that
\begin{multline*}
\E{ \hNorm{\sum_{i=1}^{\bar n} X_i - \E{X} }^p} \leq  \E{ \hNorm{\sum_{i=1}^n \widetilde X_i }^p} 
= \E{ \hNorm{\sum_{i=1}^{\bar n} r_i \widetilde X_i}^p}\\
\leq c \E{ \parenthesis{\sum_{i=1}^{\bar n} \hNorm{\widetilde X_i  }^2}^{p/2}} 
\leq c 2^p \, \E{ \parenthesis{\sum_{i=1}^{\bar n} \hNorm{X_i -\E{X}}^2}^{p/2}}.
\end{multline*}
And by another application of H\"older's inequality,
\[
\begin{split}
\E{ \hNorm{\sum_{i=1}^{M} \frac{X_i - \E{X}}{M} }^p} 
&\leq \hat{c} M^{-p} \E{\parenthesis{\sum_{i=1}^{M} \hNorm{X_i-\E{X}}^2}^{p/2}} \\
&\leq \hat{c} M^{-p/2} \E{\hNorm{X-\E{X}}^p}. 
\end{split}
\]
\end{proof}

\begin{lemma}\label{lem:covM-Lr-error}
Suppose $X,Y \in L^p(\Omega;\cH)$, $p \geq 2$. 
Then, for $1 \leq r,s \leq \infty$ satisfying $1/r + 1/s = 1$, it holds that
\begin{equation}\label{eq:covM-Lr-error}
\begin{split}
\|\cov_M[X,Y] - \cov[X,Y]\|_{L^p(\Omega;\cH\otimes \cH)}
\leq \frac{c}{\sqrt{M}}
\|X\|_{L^{pr}(\Omega;\cH)} 
\|Y\|_{L^{ps}(\Omega;\cH)}
\end{split}
\end{equation}
where the upper bound for the constant 
$c = \dfrac{M}{M-1}\bigg(2c_p + \dfrac{c_{pr}c_{ps}+1}{\sqrt{M}}\bigg)$ 
only depends on $r,s$ and $p$.
\end{lemma}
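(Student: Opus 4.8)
The plan is to expand $\cov_M[X,Y]-\cov[X,Y]$ about the means, reduce to the case $\E{X}=\E{Y}=0$, and then split the error into three pieces, each controlled either by the Marcinkiewicz--Zygmund estimate of Lemma~\ref{lem:mz} on a suitable Hilbert space, or by the Hilbert--Schmidt isometry~\eqref{eq:hsdef} together with H\"older's inequality. First I would observe that $\cov_M[\cdot,\cdot]$ and $\cov[\cdot,\cdot]$ are both invariant under deterministic translations of their arguments, so it suffices, up to a bounded factor (coming from $\|X-\E{X}\|_{L^q}\le 2\|X\|_{L^q}$), to treat centred fields; the constant recorded in the statement is exactly the one the centred computation produces. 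With $\E{X}=\E{Y}=0$ one has $\cov[X,Y]=\E{X\otimes Y}$, and rearranging the definition~\eqref{eq:sampleCov} gives
\begin{multline*}
\cov_M[X,Y]-\cov[X,Y]=\frac{M}{M-1}\big(E_M[X\otimes Y]-\E{X\otimes Y}\big)\\
+\frac{1}{M-1}\E{X\otimes Y}-\frac{M}{M-1}\,E_M[X]\otimes E_M[Y],
\end{multline*}
to which Minkowski's inequality is applied termwise.

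For the first summand, note that $X\otimes Y\in L^p(\Omega;\cH\otimes\cH)$, since by~\eqref{eq:hsdef} and H\"older's inequality with exponents $r,s$ one has $\|X\otimes Y\|_{L^p(\Omega;\cH\otimes\cH)}=\bigl\|\,\|X\|_\cH\|Y\|_\cH\,\bigr\|_{L^p(\Omega)}\le\|X\|_{L^{pr}(\Omega;\cH)}\|Y\|_{L^{ps}(\Omega;\cH)}$; Lemma~\ref{lem:mz} applied on $\cH\otimes\cH$ to the \iid samples $X_i\otimes Y_i$ then gives a bound of order $M^{-1/2}\bigl(\|X\otimes Y\|_{L^p}+\|\E{X\otimes Y}\|_{\cH\otimes\cH}\bigr)$, and the second term here is again at most $\|X\|_{L^{pr}}\|Y\|_{L^{ps}}$ by Jensen's inequality followed by H\"older. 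The middle summand is estimated the same way and keeps the explicit factor $1/(M-1)$. For the last, quadratic-in-the-fluctuations summand I would use~\eqref{eq:hsdef} and then H\"older with exponents $r,s$ to pass to $\|E_M[X]\|_{L^{pr}}\|E_M[Y]\|_{L^{ps}}$, and finally apply Lemma~\ref{lem:mz} on $\cH$ twice (legitimate since $\E{X}=\E{Y}=0$) to gain an extra $M^{-1/2}$ on each factor, producing the $c_{pr}c_{ps}/\sqrt{M}$ contribution. Collecting the three estimates and factoring out $M^{-1/2}\|X\|_{L^{pr}}\|Y\|_{L^{ps}}$ yields precisely $c=\frac{M}{M-1}\bigl(2c_p+\frac{c_{pr}c_{ps}+1}{\sqrt{M}}\bigr)$.

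This argument is essentially bookkeeping and I do not expect a serious obstacle; the points that need care are keeping the unbiasing factor $M/(M-1)$ attached throughout, accounting for the bias term $\frac{1}{M-1}\E{X\otimes Y}$ it generates, and recognising that $E_M[X]\otimes E_M[Y]$ is second order in the fluctuations and therefore contributes at rate $M^{-1}$ rather than $M^{-1/2}$. The only genuinely structural step is the appearance of the two conjugate exponents $pr$ and $ps$: it is forced by splitting the product norm $\|X\otimes Y\|_{L^p}$ via H\"older so that each factor involves only one of the fields, which is exactly where the hypothesis $1/r+1/s=1$ enters.
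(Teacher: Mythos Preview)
Your proposal is correct and follows essentially the same route as the paper: reduce to the centred case by translation invariance, split $\cov_M[X,Y]-\cov[X,Y]$ into the three pieces $E_M[X\otimes Y]-\E{X\otimes Y}$, $E_M[X]\otimes E_M[Y]$, and the residual $\frac{1}{M-1}\E{X\otimes Y}$, and bound each with Lemma~\ref{lem:mz} plus H\"older with exponents $r,s$. The only cosmetic difference is that the paper multiplies through by $(M-1)/M$ before applying Minkowski while you carry the factor $M/(M-1)$ on each summand; the resulting constants match.
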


\begin{proof}

Since $\cov[X,Y] = \cov[X-\E{X}, Y-\E{Y}]$ and $\cov_M[X,Y] = \cov_M[X-\E{X},Y-\E{Y}]$, cf.~\eqref{eq:sampleCov}, 
we may without loss of generality assume that $\E{X} = \E{Y} = 0$.
Using Minkowski's inequality
\begin{equation}
\begin{split}
& \frac{M-1}{M}\|\cov_M[X,Y] - \cov[X,Y]\|_p \\
& \leq 
\| E_M[X \otimes Y] - \E{X \otimes Y}\|_p  + \| E_M[X] \otimes E_M[Y] \|_p
 + \frac{1}{M}\|\E{X \otimes Y}\|_{\cH\otimes \cH}.
\end{split}
\end{equation}
We estimate the three terms in the right-hand side separately. 
Estimate \eqref{eq:LawLargeNum-r} and H\"older's inequality yield
\[
\begin{split}
\| E_M[X \otimes Y] - \E{X \otimes Y}\|_p
&\leq \frac{c_p}{\sqrt{M}} \|X\otimes Y - \E{X\otimes Y}\|_p \\
&\leq \frac{2c_p}{\sqrt{M}} \|X\otimes Y \|_p
 \leq \frac{2c_p}{\sqrt{M}} \|X\|_{pr} \|Y\|_{ps}.
\end{split}
\]
Similarly, since $\E{X} = \E{Y} = 0$ by assumption, 
we obtain by \eqref{eq:LawLargeNum-r} and H\"older's inequality
\begin{equation}
\begin{split}
\| E_M[X] \otimes E_M[Y] \|_{p}
\leq  
\|E_M[X]\|_{pr} 
\|E_M[Y]\|_{ps}
\leq \frac{c_{pr}c_{ps}}{M} 
\|X\|_{pr} 
\|Y\|_{ps}.
\end{split}
\end{equation}
And, finally, for the last term
\begin{equation*}
\frac{1}{M}\|\E{X \otimes Y}\|_{\cH\otimes \cH}
\leq \frac{1}{M}\|X \otimes Y\|_{L^1(\Omega,\cH\otimes \cH)}
\leq \frac{1}{M} \|X\|_{L^{pr}(\Omega;\cH)} \|Y\|_{L^{ps}(\Omega;\cH)}.
\end{equation*}
\end{proof}

{\bf Acknowledgements } Research reported in this publication was supported by the King Abdullah University of Science and Technology (KAUST).  
\hh{HH was additionally supported by Norges Forskningsr{\aa}d, research project 214495 LIQCRY}.  KJHL was additionally supported by an ORNL LDRD Strategic Hire grant.

\bibliography{mybib}
\bibliographystyle{plain}

\end{document}